\documentclass[12pt,a4paper,english]{article}

\newcommand{\beq}{\begin{equation}}
\newcommand{\eeq}{\end{equation}}

\usepackage[latin1]{inputenc}
\usepackage{babel}
\usepackage[centertags]{amsmath}
\usepackage{amsfonts}
\usepackage{amssymb}
\usepackage{color}
\usepackage{amsthm}
\usepackage{newlfont}
\usepackage{graphicx}
\usepackage{dsfont}
\usepackage{geometry}
\usepackage{mathrsfs}
\usepackage{authblk}
\usepackage{verbatim}
\usepackage{hyperref}

\setlength{\textheight}{24cm}
\setlength{\textwidth}{16cm}
\setlength{\voffset}{-1cm}
\setlength{\hoffset}{-0,85cm}

\newtheorem{theorem}{Theorem}[section]
\newtheorem{lemma}[theorem]{Lemma}
\newtheorem{coroll}[theorem]{Corollary}
\newtheorem{prop}[theorem]{Proposition}

\newtheorem{remark}[theorem]{Remark}

\newcommand{\msc}[1]{\textbf{MSC2010:} #1.}
\newcommand{\keywords}[1]{\textbf{Key words:} #1.}
\newcommand{\ackn}[1]{\textbf{Acknowledgments:} #1.}

\def\hs{\hspace}

\def\RR{\mathbb R}
\def\NN{\mathbb N}
\def\EE{\mathsf E}
\def\PP{\mathsf P}
\def\CC{\mathcal C}
\def\DD{\mathcal D}
\def\eps{\varepsilon}
\def\cF{{\cal F}}
\def\supp{\text{supp}}

\pagestyle{myheadings}
\markright{ \rm Integral equations for Rost's barriers}

\makeatletter  
\@addtoreset{equation}{section}

\makeatother  
\setcounter{section}{0}
\setcounter{equation}{0}

\begin{document}
\title{\textbf{Integral equations for Rost's reversed barriers: existence and uniqueness results}
}
\author{Tiziano De Angelis\thanks{School of Mathematics, University of Leeds, Woodhouse Lane~LS2 9JT Leeds, UK; \texttt{t.deangelis@leeds.ac.uk}}\:\:\:\:and\:\:\:\:Yerkin Kitapbayev\thanks{Questrom School of Business, Boston University, 595 Commonwealth Avenue, 02215, Boston, MA, USA; \texttt{yerkin@bu.edu}}}
\maketitle

\begin{abstract}
We establish that the boundaries of the so-called Rost's reversed barrier are the unique couple of left-continuous monotonic functions solving a suitable system of nonlinear integral equations of Volterra type. Our result holds for atom-less target distributions $\mu$ of the related Skorokhod embedding problem.

The integral equations we obtain here generalise the ones often arising in optimal stopping literature and our proof of the uniqueness of the solution goes beyond the existing results in the field.
\end{abstract}
\msc{60G40, 60J65, 60J55, 35R35, 45D05}
\vspace{+8pt}

\noindent\keywords{Skorokhod embedding, Rost's reversed barriers, optimal stopping, free-boundary problems, Volterra integral equations}
\section{Introduction}

The celebrated Skorokhod embedding problem formulated in the 1960's by Skorokhod \cite{Skor65} can be stated as follows: given a probability law $\mu$, find a stopping time $\tau$ of a standard Brownian motion $W$ such that $W_\tau$ is distributed according to $\mu$. {Several solutions to this problem have been found over the past fifty years in a variety of settings (see for instance \cite{AY79}, \cite{Per86}, \cite{Rost71}, \cite{Root}, \cite{Val83}, \cite{CH06} and \cite{COT15} among many others; see also the survey \cite{Ob04}) and very often they find important applications ranging from general martingale theory (see for instance \cite{BrHoRo01} and \cite{Ro93}) to mathematical finance (see \cite{Dup05} or \cite{Hob98} amongst many others, and \cite{Hob11} for a survey).} 

Here we consider a solution proposed by Rost \cite{Rost71} which can be given in terms of first hitting times of the time-space process $(t,W_t)_{t\ge0}$ to a set, depending on $\mu$, usually called \emph{reversed barrier} \cite{Chacon85}. The boundaries of this set can be expressed as two monotonic functions of time (see \cite{Cox-Pe13}) and we denote them $t\mapsto s_\pm(t)$ (here we understand $s_\pm(t)=s_\pm(t;\mu)$ depending on $\mu$ but prefer to adopt a simpler notation as no confusion should arise).

Given a atom-less target distribution $\mu$ (possibly singular like the Cantor distribution) we use purely probabilistic methods to characterise the couple $(s_+,s_-)$ as the unique solution of suitable nonlinear integral equations of Volterra type. To achieve that we use a known connection between Rost's solution of the embedding problem and an optimal stopping problem (see for instance \cite{Cox-Wang13b}, \cite{McCon91} and \cite{DeA15-SK}) so that our contribution becomes twofold: on the one hand we provide a new way of characterising and evaluating $s_\pm$ and on the other hand we extend in several directions the existing literature concerning uniqueness of solutions to certain integral equations.

To the best of our knowledge until now there exist essentially two ways of computing Rost's reversed barriers: a constructive way and a PDE way. The former approach was proposed by Cox and Peskir \cite{Cox-Pe13} who approximate the boundaries $s_\pm$ via a sequence of piecewise constant boundaries (in our notation $(s^n_\pm)_{n\ge0}$). They prove suitable convergence of $s^n_\pm$ to $s_\pm$ as $n\to\infty$ and for each $n$ they also provide equations that can be solved numerically to find $s^n_\pm$ (however no equations could be provided for $s_\pm$ in the limit). In practice for $n$ large enough one can compute an accurate approximation of $s_\pm$. The PDE approach instead relies upon the aforementioned connection to optimal stopping as proved in \cite{Cox-Wang13b} through viscosity solution of variational inequalities. The idea is that one should numerically solve a variational inequality to obtain the value function of the stopping problem and then use it to compute the stopping set. The boundaries of such stopping set indeed coincide with $s_\pm$ up to a suitable time reversal. An approach of this kind was used in Section 4 of \cite{GODR14}, although in the different context of Root's solutions to Skorokhod embedding.

Here we do use the connection to optimal stopping but instead of employing PDE theory we only rely upon stochastic calculus. Our work offers a point of view different to the one of \cite{Cox-Pe13} and \cite{Cox-Wang13b} since we directly characterise the boundaries $s_\pm$ of Rost's barriers as the unique solutions to specific equations. We do not develop a constructive approach nor we use an indirect approach via variational inequalities. The computational effort required to evaluate our equations numerically is rather modest and hinges on a simple algorithm which is widely used in the optimal stopping literature (see Section \ref{sec:num}). It is worth mentioning that McConnell \cite{McCon91} provided a family of uncountably many integral equations for $s_\pm$ which appear to be very different from ours as they involve at the same time the boundaries and (in our language) the value function of the optimal stopping problem. Solvability of these equations seems to be left as an open question and we are not aware of any further contributions in the direction of finding its answer.

To clarify our contribution to the literature on integral equations and optimal stopping we would like to recall that nonlinear equations of Volterra type arise frequently in problems of optimal stopping on finite time horizon. Such equations are indeed used to compute the related optimal stopping boundaries (see \cite{Pes-Shir} for a collection of examples) and their study in this context goes back to McKean \cite{McK65} and Van Moerbeke \cite{VM} among others (see also \cite{CJM92}, \cite{Jacka}, \cite{KIM} and \cite{Myn92} for the American put exercise boundary). Volterra equations are also widely studied in connection with the Stefan problem in PDE theory (see for example \cite{Can} and Chapter 8 in \cite{Fri64}).

The particular equations that we consider here (see Theorem \ref{thm:integreq} below) are more general than the ones usually addressed in optimal stopping literature and in PDE theory. All examples we know from optimal stopping involve integrals with respect to the Lebesgue measure on $\RR_+\times\RR$. Our equations instead involve integrals with respect to a product measure $dt\times(\nu-\mu)(dx)$ on $\RR_+\times\RR$, where $\mu$ and $\nu$ are rather general probability measures. Only when both $\mu$ and $\nu$ have density with respect to the Lebesgue measure we can rely upon the existing results.   

As for the integral equations used in PDE theory in the context of the Stefan problem, there is no direct way of employing 
the related existence and uniqueness results (see for example Chapter 8 in \cite{Can} and compare to our \eqref{eq:intEq}). This is due to the need for a regularity of the terms in the equation that cannot be achieved when considering Rost's solutions to the Skorokhod embedding problem.

In this sense our first contribution is to establish existence of solutions for such broader class of integral equations. Secondly, the question of uniqueness of the solution was a long standing problem for those equations normally addressed in optimal stopping theory. It found a positive answer in the paper by Peskir \cite{Pe-0} where uniqueness was proved in the class of continuous functions. Here we prove that such uniqueness holds for a more general class of integral equations and, more interestingly, we prove that the solution is indeed unique in the class of \emph{right-continuous} functions, thus extending Peskir's previous results (note that our result obviously holds for all the cases covered by \cite{Pe-0}). We remark that although the line of proof is based on a \emph{4-step} procedure as in \cite{Pe-0}, the greater generality of our setting requires new arguments to prove each one of the four steps so that our proof could not be derived by the original one in \cite{Pe-0}.

Before concluding this introduction we would like to point out that our work is {somehow} related to a recent one by Gassiat, Mijatovi\'c and Oberhauser \cite{GMO15}. There the authors consider another solution of the Skorokhod embedding problem, namely Root's solution \cite{Root}, which is also given in terms of a barrier set in time-space. The boundary of Root's barrier is expressed as a function of space $x\mapsto r(x)$ rather than a function of time. Under the assumption of atom-less target measures $\mu$ (as in our case) \cite{GMO15} prove that the boundary  $r(\,\cdot\,)$ solves a Volterra-type equation which is reminiscent of the ones obtained here. Uniqueness is provided via arguments based on viscosity theory in the special cases of laws $\mu$'s which produce continuous boundaries $r(\,\cdot\,)$. This additional requirement excludes for instance continuous distributions singular with respect to the Lebesgue measure (e.g.~Cantor distribution) which are instead covered in our paper.

It is important to notice that although Rost's and Root's solutions are conceptually related there seems to be no easy way of obtaining one from the other. Therefore even if equations found here and those in \cite{GMO15} may look similar we could not establish any rigorous link. Rost's barriers enjoy nicer regularity properties than Root's ones (e.g.~monotonicity and left continuity) and this allowed us to prove uniqueness of solution for the integral equations even in the case of continuous singular measures $\mu$. For the same reason the Volterra equations that we obtain here are relatively easy to compute numerically; indeed we can run our algorithm forward in time starting from known initial values $s_\pm(0)$ of the boundaries at time zero. The equations obtained in \cite{GMO15} are instead more complicated as there are no clear boundary conditions on $r(\,\cdot\,)$ that can be used in general to initialise the algorithm. In fact the numerical evaluation of the boundaries in \cite{GMO15} is restricted to the class of $\mu$'s that produce symmetric, continuous, monotonic on $[0,\infty]$, maps $x\mapsto r(x)$. Finally the existence results in our paper and in \cite{GMO15} are derived in completely different ways.

The rest of the paper is organised as follows. In Section \ref{sec:sett} we provide some background material on Rost's barriers, state our main result in Theorem \ref{thm:integreq} and recall the link to a suitable optimal stopping problem which is needed for the proof. Section \ref{sec:proof} is entirely devoted to the proof of Theorem \ref{thm:integreq}. Finally, in Section \ref{sec:num} we address the numerical solution of the integral equations.

\section{Setting, background material and results}\label{sec:sett}


Let $(\Omega,\cF,\PP)$ be a probability space, $B:=(B_t)_{t\ge0}$ a one dimensional standard Brownian motion 
and denote $(\cF_t)_{t\ge0}$ the natural filtration of $B$ augmented with $\PP$-null-sets. Throughout the paper we will equivalently use the notations $\EE f(B^x_t)$ and $\EE_x f(B_t)$, $f:\RR\to \RR$, Borel-measurable, to refer to expectations under the initial condition $B_0=x$. We also use the notation $X\sim\mu$ for a random variable $X$ with law $\mu$.

Let $\mu$ and $\nu$ be probability measures on $\RR$. We denote by $F_\mu(x):=\mu((-\infty,x])$ and $F_\nu(x):=\nu((-\infty,x])$ the (right-continuous) cumulative distributions functions of $\mu$ and $\nu$. Throughout the paper we will use the following notation:
\begin{align}
\label{not:a}&a_+:=\sup\{x\in\RR\,:\,x\in\supp\,\nu\}\quad\text{and}\quad a_-:=-\inf\{x\in\RR\,:\,x\in\supp\,\nu\}\\[+4pt]
\label{not:mu}&\mu_+:=\sup\{x\in\RR\,:\,x\in\supp\,\mu\}\quad\text{and}\quad\mu_-:=-\inf\{x\in\RR\,:\,x\in\supp\,\mu\}
\end{align}
and for the sake of simplicity but with no loss of generality we will assume $a_\pm\ge 0$. We also make the following assumptions which are standard in the context of Rost's solutions to the Skorokhod embedding problem (see for example \cite{Cox-Pe13} and in particular Remark 2 therein).
\begin{itemize}
\item[(D.1)] There exist numbers $\hat{b}_+\ge a_+$ and $\hat{b}_-\ge a_-$ such that $(-\hat{b}_-,\hat{b}_+)$ is the largest interval containing $(-a_-,a_+)$ with $\mu((-\hat{b}_-,\hat{b}_+))=0$;
\item[(D.2)] If $\hat b_+=a_+$ (resp.~$\hat b_-=a_-$) then $\mu(\{\hat{b}_+\})=0$ (resp.~$\mu(\{-\hat{b}_-\})=0$). 
\end{itemize}
It should be noticed in particular that in the canonical example of $\nu(dx)=\delta_0(x)dx$ the above conditions hold for any $\mu$ such that $\mu(\{0\})=0$. We also observe that assumption (D.2) is made in order to avoid solutions of the Skorokhod embedding problem involving randomised stopping times whereas assumption (D.1) guarantees that at each time $t>0$ the $t$-section of the Rost barrier is the union of at most two connected components.

Now we recall a particular result recently obtained by Cox and Peskir \cite{Cox-Pe13} in the context of a more general work on Rost's solutions of the Skorokhod embedding problem. Here we give a different statement for notational convenience and the next theorem summarises \cite[Thm.~1, Prop.~9 and Thm.~10]{Cox-Pe13}.
\begin{theorem}[Cox \& Peskir \cite{Cox-Pe13}]\label{thm:CoxPe}
Let $W^\nu:=(W^\nu_t)_{t\ge0}$ be a standard Brownian motion with initial distribution $\nu$ and let (D.1) and (D.2) hold. There exists a unique couple of left continuous, increasing functions $s_+:\RR_+\to\RR_+\cup\{+\infty\}$ and $s_-:\RR_+\to\RR_+\cup\{+\infty\}$ such that if we define
\begin{align}\label{eq:sigma*}
\sigma_*:=\inf\big\{t>0\,:\,W^\nu_t\notin \big(-s_-(t),s_+(t)\big)\big\}
\end{align}
then $W^\nu_{\sigma_*}\sim\mu$.

Moreover, letting $\Delta s_\pm(t):=s_\pm(t+)-s_\pm(t)\ge 0$, for any $t\ge 0$ such that $s_\pm(t)<+\infty$ it also holds
\begin{align}
\label{eq;jumpb}&\Delta s_+(t)>0\quad\Leftrightarrow\quad \mu\big(\big(s_+(t),s_+(t+)\big)\big)=0\\
\label{eq;jumpb2}&\Delta s_-(t)>0\quad\Leftrightarrow\quad \mu\big(\big(-s_-(t+),-s_-(t)\big)\big)=0.
\end{align}
\end{theorem}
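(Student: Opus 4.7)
The plan is to invoke Rost's original potential-theoretic embedding theorem and then carry out a structural analysis under assumptions (D.1)--(D.2). First, I would use Rost's classical theorem to produce a Borel set $R\subset\RR_+\times\RR$ that is a \emph{reversed barrier}, meaning $(t,x)\in R$ and $t'\le t$ imply $(t',x)\in R$, and such that the first hitting time $\sigma_R:=\inf\{t>0:(t,W^\nu_t)\in R\}$ satisfies $W^\nu_{\sigma_R}\sim\mu$. Equivalently, $R=\{(t,x):t\le h(x)\}$ for an upper semi-continuous function $h:\RR\to[0,\infty]$, so that the stopping section at time $t$ is $\{x:h(x)\ge t\}$.

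The key structural step is to show that, under (D.1), the function $h$ is nondecreasing on $[\hat b_+,\infty)$, nonincreasing on $(-\infty,-\hat b_-]$, and identically $+\infty$ on $(-\hat b_-,\hat b_+)$. The interior behaviour follows because $\mu$ places no mass there, so no path can be stopped inside $(-\hat b_-,\hat b_+)$. Monotonicity on each half-line is obtained by a rearrangement argument: since $\nu$ is supported inside $[-\hat b_-,\hat b_+]$, any non-monotone reversed barrier can be replaced by the monotone rearrangement of its half-line sections without altering the embedded distribution, and essential uniqueness of Rost's construction forces $h$ itself to be monotone. Setting $s_+(t):=\inf\{x\ge\hat b_+:h(x)\ge t\}$ and $-s_-(t):=\sup\{x\le-\hat b_-:h(x)\ge t\}$ then gives $R_t^c=(-s_-(t),s_+(t))$ and the representation \eqref{eq:sigma*}. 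Monotonicity of $s_\pm$ is immediate from the reversed-barrier property. Left-continuity of $s_+$ follows from the upper semi-continuity and monotonicity of $h$: for $t_n\uparrow t$, the sequence $s_+(t_n)$ is nondecreasing and bounded above by $s_+(t)$, with $h(s_+(t_n))\ge t_n$ (since the closed level set $\{h\ge t_n\}$ contains its infimum); taking the limit $\ell:=\lim_n s_+(t_n)$ and applying USC yields $h(\ell)\ge t$, whence $\ell\ge s_+(t)$ by definition of $s_+(t)$, giving $\ell=s_+(t)$. The argument for $s_-$ is symmetric, and uniqueness of the pair among left-continuous monotone representatives is inherited from the essential uniqueness of $R$.

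To establish the jump equivalence \eqref{eq;jumpb} in the nontrivial case $\Delta s_+(t)>0$, observe that the definition of $s_+$ combined with the monotonicity and USC of $h$ forces $h\equiv t$ on the open interval $(s_+(t),s_+(t+))$. For $W^\nu_{\sigma_*}$ to take a value in $(s_+(t),s_+(t+))$ one would need $\sigma_*>t$, but by monotonicity of $s_+$ this entire interval sits inside the continuation region at every time $s>t$, contradicting stopping. Hence $\mu((s_+(t),s_+(t+)))=0$. The reverse implication is a rearrangement statement: if $\mu$ assigns no mass to an open interval $(x_1,x_2)$ with $x_1\ge s_+(t)$, then the left-continuous monotone representative of $h$ may be taken constant on $(x_1,x_2)$ without affecting the embedding, and by uniqueness this registers as a jump of $s_+$. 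The argument for \eqref{eq;jumpb2} is symmetric under reflection.

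The principal obstacle is the structural step in the second paragraph: extracting monotonicity of $h$ on each half-line for possibly singular continuous target measures $\mu$ (for instance, the Cantor distribution), where classical density-based rearrangement arguments are unavailable and the potential-theoretic argument must be carried out at the level of measures. Once this monotonicity is in hand, the remaining claims follow by direct analysis.
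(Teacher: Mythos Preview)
The paper does not supply its own proof of this theorem. It is presented as background material: the statement is explicitly attributed to Cox and Peskir \cite{Cox-Pe13} in the theorem heading, and the text immediately following the statement says that ``the above theorem is stated in \cite[Thm.~1]{Cox-Pe13}'' (with Remark~2 therein covering general $\nu$), and that an alternative proof via optimal stopping appears in \cite[Thm.~2.3]{DeA15-SK}. Thus there is no in-paper argument to compare your proposal against; the paper simply cites these references and moves on.

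For context, your sketch is also not the route taken in \cite{Cox-Pe13}. Cox and Peskir do not start from an abstract Rost barrier and extract monotonicity by rearrangement; their method is \emph{constructive}. They approximate $\mu$ by a sequence of measures $\mu_n$ supported on finitely many points, build the boundaries $s^n_\pm$ explicitly for each $\mu_n$ (where the problem reduces to determining finitely many hitting levels), and then pass to the limit to obtain $s_\pm$ together with the regularity and jump properties. Your outline, by contrast, presupposes the existence of a reversed barrier in the form $R=\{(t,x):t\le h(x)\}$ and tries to deduce the shape of $h$ a posteriori. The step you flag as the ``principal obstacle'' --- forcing monotonicity of $h$ on each half-line via a rearrangement/essential-uniqueness argument --- is genuinely nontrivial and is not how the cited references proceed; in particular, invoking ``essential uniqueness of Rost's construction'' at that stage is circular unless you have an independent uniqueness statement for reversed barriers, which is part of what the theorem asserts.
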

\vspace{+5pt}

We point out that the above theorem is stated in \cite[Thm.~1]{Cox-Pe13} for the special case $\nu(dx)=\delta_0(x)dx$. Remark 2 in \cite{Cox-Pe13} clarifies that the result and its proof remain identical for any initial distribution satisfying (D.1) and (D.2). Alternatively the same result may be found in \cite[Thm.~2.3]{DeA15-SK} where a different proof relying upon optimal stopping theory is provided for general $\nu$. 

Since $s_\pm$ are the boundaries of the Rost barrier associated to $\mu$, it is clear that flat portions of $s_+$ or $s_-$ must correspond to atoms of $\mu$. Hence the next corollary follows.
\begin{coroll}\label{cor:flat}
If the measure $\mu$ is atom-less then the boundaries $s_+$ and $s_-$ are strictly monotone.
\end{coroll}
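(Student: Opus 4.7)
My plan is to argue the contrapositive for each boundary: if $s_+$ fails to be strictly increasing then $\mu$ has an atom (the argument for $s_-$ being symmetric). By monotonicity and left-continuity of $s_+$ as provided by Theorem \ref{thm:CoxPe}, any failure of strict monotonicity at a finite level yields $t_1<t_2$ and $c\in[0,\infty)$ with $s_+\equiv c$ on $[t_1,t_2]$. Since $W^\nu_{\sigma_*}\sim\mu$, it suffices to show
\[
\PP\bigl(\sigma_*\in(t_1,t_2),\,W^\nu_{\sigma_*}=s_+(\sigma_*)\bigr)>0,
\]
because on this event $W^\nu_{\sigma_*}=c$, whence $\mu(\{c\})>0$, contradicting atom-less-ness.

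To establish this, I would first prove $\PP(\sigma_*>t_1)>0$ by invoking the uniqueness in Theorem \ref{thm:CoxPe}. Indeed, were $\sigma_*\le t_1$ almost surely, then setting $\tilde s_+(t):=s_+(t)$ for $t\le t_1$ and $\tilde s_+(t):=+\infty$ for $t>t_1$ would produce another left-continuous, nondecreasing function valued in $\RR_+\cup\{+\infty\}$ for which $(\tilde s_+,s_-)$ still embeds $\mu$ via the same exit time, contradicting uniqueness. Granted $\PP(\sigma_*>t_1)>0$, the strong Markov property at $t_1$ reduces matters to a standard Brownian exit estimate: conditionally on $\{\sigma_*>t_1\}$, $W^\nu_{t_1}$ takes values in $(-s_-(t_1),c)$, and from any such starting point a Brownian motion hits the constant upper level $c$ before the nonincreasing lower boundary $t\mapsto -s_-(t_1+t)$ within time $t_2-t_1$ with strictly positive probability, since the admissible strip only widens downwards during $[t_1,t_2]$.

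The main obstacle is the uniqueness step establishing $\PP(\sigma_*>t_1)>0$; the remainder of the argument is routine. One must carefully verify that the modification $\tilde s_+$ remains within the class of left-continuous, nondecreasing functions with values in $\RR_+\cup\{+\infty\}$ to which the uniqueness in Theorem \ref{thm:CoxPe} applies, and interpret the statement of strict monotonicity as referring to the finite range of $s_\pm$, since on intervals where $s_+\equiv+\infty$ the upper boundary plays no role and no path exits through it.
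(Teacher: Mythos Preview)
Your argument is correct and is precisely a rigorous fleshing-out of what the paper leaves implicit. The paper's ``proof'' is the single sentence preceding the corollary: \emph{``Since $s_\pm$ are the boundaries of the Rost barrier associated to $\mu$, it is clear that flat portions of $s_+$ or $s_-$ must correspond to atoms of $\mu$.''} You supply exactly the missing details behind this claim, namely that a flat stretch $s_+\equiv c$ on $[t_1,t_2]$ forces $\PP(W^\nu_{\sigma_*}=c)>0$.

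One remark on your route: invoking the uniqueness part of Theorem~\ref{thm:CoxPe} to obtain $\PP(\sigma_*>t_1)>0$ is neat and valid as the theorem is stated here, but it is also somewhat indirect. A more self-contained alternative is to note that $s_\pm(t)\ge \hat b_\pm$ for $t>0$ (Theorem~\ref{them:OS}), so the admissible strip always contains $(-\hat b_-,\hat b_+)\supseteq(-a_-,a_+)$; since $\nu$ places mass in this open interval (assumption (D.1)), a Brownian path starting there remains inside the growing strip up to time $t_1$ with positive probability. Either way the conclusion follows, and your subsequent hitting-time step is routine, as you note. Your closing caveat about interpreting strict monotonicity on the finite range of $s_\pm$ is also appropriate.
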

\vspace{+5pt}

In preparation for the statement of our main result let us denote
\begin{align}\label{def:transd}
p(t,x,s,y):=\frac{1}{\sqrt{2\pi(s-t)}}e^{-\frac{(x-y)^2}{2(s-t)}},\quad\text{for $t<s$, $x,y\in\RR$}
\end{align}
the Brownian motion transition density and let $(L^y_s)_{s\ge0}$ be the local time of $B$ at a point $y\in\RR$.

As in Section 4 of \cite{DeA15-SK} we introduce the (generalised) inverse of $s_\pm$ defined by
\begin{align}\label{eq:invbarrier00}
\varphi(x):=
\left\{
\begin{array}{ll}
\inf\{t\ge 0\,:\,-s_-(t)< x \}, & x\le -s_-(0),\\[+4pt]
0, & x\in(-s_-(0),s_+(0)),\\[+4pt]
\inf\{t\ge0\,:\,s_+(t)> x\}, & x\ge s_+(0).
\end{array}
\right.
\end{align}
Note that for $t>0$, one has $x\in(-s_-(t),s_+(t))$ if and only if $\varphi(x)<t$.
Moreover $\varphi$ is positive, decreasing left-continuous on $\RR_-$ and increasing right-continuous on $\RR_+$ (hence upper semi-continuous).

Now that we have introduced the necessary background material we can provide the main result of this work, i.e.~the characterisation of the boundaries $s_\pm$ as the unique solution of suitable nonlinear integral equations of Volterra type. Its proof will be given in the next section.
\begin{theorem}\label{thm:integreq}
Assume that $\mu$ is atom-less. Then for any $T>0$ the couple $(s_+,s_-)$ is the unique couple of left-continuous, increasing, positive functions that solve the system
\begin{align}\label{eq:intEq}
\int_t^T\int_{-s_-(T-u)}^{s_+(T-u)}p(t,\pm s_\pm(T-t),u,y)\big(\nu-\mu\big)(dy)\,du=0,\quad t\in[0,T)
\end{align}
with initial conditions $s_\pm(0)=\hat{b}_\pm$. Equivalently we may express \eqref{eq:intEq} in terms of $\varphi$ as
\begin{align}\label{eq:intEq2}
\int_\RR\mathds{1}_{\{y\,:\,\varphi(x)>\varphi(y)\}}\EE_x L^y_{\varphi(x)-\varphi(y)}\big(\nu-\mu\big)(dy)=0,\quad x\in \RR.
\end{align}
\end{theorem}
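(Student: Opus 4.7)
The plan is to exploit the connection between Rost's reversed barrier and an associated optimal stopping problem recalled in \cite{DeA15-SK}. The value function $V(t,x)$ of that problem has continuation region $\{(t,x):0\le t<T,\,-s_-(T-t)<x<s_+(T-t)\}$, satisfies a free-boundary problem whose source term involves the signed measure $\nu-\mu$ (in a distributional sense), and obeys value matching on the free-boundary curves $x=\pm s_\pm(T-t)$.

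For the existence part I would apply a generalised It\^o/change-of-variable formula to $V(t,B_t)$ on the interval $[t,T]$, started at the boundary point $B_t=\pm s_\pm(T-t)$. Using value matching at the initial point, the known terminal value at time $T$, and the PDE satisfied by $V$ inside the continuation region, the unknown function $V$ is expected to cancel after taking expectations, leaving precisely \eqref{eq:intEq}. The equivalent representation \eqref{eq:intEq2} would then follow by Fubini and the occupation-time formula combined with the definition of $\varphi$ in \eqref{eq:invbarrier00}.

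For uniqueness I would follow the four-step strategy of \cite{Pe-0}. Let $(c_+,c_-)$ be any other couple of left-continuous, increasing, positive functions satisfying \eqref{eq:intEq} with the correct initial values $\hat b_\pm$. \textbf{Step 1:} define a candidate function $U^c(t,x)$ via a Feynman--Kac--type formula associated to first exit from $\{-c_-(T-u)<y<c_+(T-u)\}$ and use \eqref{eq:intEq} for $c_\pm$ to match $U^c$ with the obstacle on the curves $x=\pm c_\pm(T-t)$. \textbf{Step 2:} show that $U^c$ agrees with the obstacle on the full ``stopping region'' $\{x\ge c_+(T-t)\}\cup\{x\le -c_-(T-t)\}$. \textbf{Step 3:} compare $U^c$ with $V$ on both sides of the two candidate boundaries and exploit the optimality of $V$ to rule out strict separation between $c_\pm$ and $s_\pm$ on an initial time interval $[0,\varepsilon]$. \textbf{Step 4:} propagate this local equality up to $[0,T]$ by iterating the comparison via the Markov property.

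The main obstacle lies in Steps~1--3 under the weak regularity setting of the statement: $\mu$ is only assumed atom-less and may be continuous singular (e.g.~the Cantor distribution), while candidate solutions are merely left-continuous. One cannot therefore rely on classical parabolic regularity of $U^c$ nor on a standard smooth-fit identity at the free boundary --- exactly the tools on which \cite{Pe-0} builds --- and must instead work directly with the signed measure $\nu-\mu$ integrated against Gaussian kernels. Strict monotonicity of $s_\pm$ (Corollary~\ref{cor:flat}) together with the jump identities \eqref{eq;jumpb}--\eqref{eq;jumpb2} should be essential to justify the needed limit passages and to control the candidate boundaries from above and below; this is where the new arguments announced in the introduction are expected.
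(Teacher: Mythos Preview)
Your existence sketch is essentially the paper's: Proposition~\ref{prop:integreq} is proved by a (mollified) It\^o--Tanaka argument yielding the representation \eqref{eq:intU}--\eqref{eq:intU2}, and \eqref{eq:intEq} then drops out by evaluating $U^T$ at $x=\pm s_\pm(T-t)$.

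For uniqueness, though, your four steps drift from what the paper actually does, and the drift matters in this low-regularity setting. First, the paper does \emph{not} define the candidate value $U^c$ through a Feynman--Kac formula tied to first exit from $\{-c_-(T-u)<y<c_+(T-u)\}$; with a possibly singular $\mu$ and merely left-continuous boundaries such a representation is exactly what one cannot justify. Instead the paper mimics \eqref{eq:intU2} directly, setting
\[
U^c(t,x):=\int_\RR \EE_x L^y_{(T-t-\psi(y))^+}\,(\nu-\mu)(dy),
\]
with $\psi$ the generalised inverse of the candidate pair $(c_+,c_-)$, and proves from scratch that $s\mapsto U^c(t+s,B_s)+\int_\RR L^y_{s\wedge(T-t-\psi(y))^+}(\nu-\mu)(dy)$ is a $\PP_x$-martingale. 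This martingale identity (not a PDE for $U^c$) is the engine for everything that follows.

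Second, the paper's Steps~3--4 are \emph{not} ``local equality on $[0,\eps]$, then propagate by the Markov property''. They are two global one-sided inequalities, each proved by contradiction: Step~3 shows $c_\pm\le b_\pm$ on all of $[0,T)$, Step~4 shows $c_\pm\ge b_\pm$. The mechanism is purely pathwise control of where the local times $L^y$ can grow relative to the monotone boundaries (e.g.\ if $B$ starts above $c_+(t)$ and $\tau$ is first contact with $c_+$, then $L^y_{\tau\wedge(T-t-\psi(y))^+}=0$ for all relevant $y$), combined with the two martingale identities for $U$ and $U^c$. A local-to-global iteration is precisely the kind of argument that relies on continuity of the boundary to glue pieces together, and it is what the paper is at pains to avoid in order to cover merely left/right-continuous solutions.
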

\begin{remark}
It is important to notice that \eqref{eq:intEq} reduces to a single equation if $\supp\{\mu\}\cap\RR_-=\emptyset$ (hence $s_-(\cdot)=+\infty$) or $\supp\{\mu\}\cap\RR_+=\emptyset$ (hence $s_+(\cdot)=+\infty$). Figures 2 and \ref{fig:3} illustrate numerical solutions of \eqref{eq:intEq} also in cases with a single boundary. 
\end{remark}
\vspace{+5pt}

Since the proof of the above theorem relies on the analysis of a problem of optimal stopping, then we introduce such problem here before closing this section. For $0<T<+\infty$ and $(t,x)\in[0,T]\times\RR$ we denote
\begin{align}
\label{eq:G}G(x):=&2\int_0^x\big(F_\nu(z)-F_\mu(z)\big)dz
\end{align}
and define
\begin{align}
\label{eq:V}V^T(t,x):=&\sup_{0\le \tau\le T-t}\EE_x G(B_\tau)
\end{align}
where the supremum is taken over all $(\cF_t)$-stopping times in $[0,T-t]$. Here the upper index $T$ in the definition of the value function is used to account for the time horizon explicitly. As usual the continuation set $\CC_T$ and the stopping set $\DD_T$ of \eqref{eq:V} are given by
\begin{align}
\label{eq:CC}&\CC_T:=\{(t,x)\in[0,T]\times\RR\,:\,V^T(t,x)>G(x)\}\\[+3pt]
\label{eq:DD}&\DD_T:=\{(t,x)\in[0,T]\times\RR\,:\,V^T(t,x)=G(x)\}
\end{align}
and clearly 
\begin{align*}
\DD_T=([0,T]\times\RR)\setminus\CC_T.
\end{align*}
Throughout the paper we will often use the following notation: for a set $A\subset [0,T]\times \RR$ we denote $A\cap\{t<T\}:=\{(t,x)\in A:t<T\}$.

Problem \eqref{eq:V} was addressed in \cite{DeA15-SK} where it was shown that the geometry of the continuation and stopping sets can be characterised in terms of the boundaries $s_\pm$ of the reversed Rost's barrier. Here we summarise those results in the following theorem under assumptions (D.1) and (D.2).
\begin{theorem}[De Angelis \cite{DeA15-SK}]\label{them:OS}
For each $T>0$ it holds
\begin{align}
\label{eq:CC-2}&\CC_T=\big\{(t,x)\in[0,T)\times\RR \,:\,x\in \big(-s_-(T-t),s_+(T-t)\big)\big\}
\end{align}
and the smallest optimal stopping time of problem \eqref{eq:V} is given by
\begin{align}\label{eq:optst}
\tau^T_*(t,x)=\inf\big\{s\in[0,T-t]\,:\,(t+s,x+B_s)\notin\CC_T\big\}
\end{align}
for $(t,x)\in[0,T]\times\RR$. Moreover the functions $s_\pm$ fulfill the following additional properties: $s_\pm(T-t)\ge \hat b_\pm$ for $t\in[0,T)$ and $s_\pm(0+)=\hat{b}_\pm$.
\end{theorem}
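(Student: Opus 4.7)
The plan is to establish the characterization of $\CC_T$, the optimality of $\tau^T_*$, and the boundary properties in four stages, exploiting the Ito--Tanaka representation of the gain $V^T-G$ together with the uniqueness of Rost's barrier from Theorem \ref{thm:CoxPe}.

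First, I would establish the basic optimal stopping apparatus. Since $G$ is continuous with linear growth (as $F_\nu-F_\mu$ is bounded, \eqref{eq:G} yields $|G(x)|\le C|x|$) and $\EE_x\sup_{s\le T}|B_s|<\infty$, a standard dominated-convergence argument shows $V^T$ is continuous on $[0,T]\times\RR$. Hence $\CC_T$ is open, $\DD_T$ is closed, and classical optimal stopping theory (e.g.\ Shiryaev's general theory) gives that the first entry time $\tau^T_*$ in \eqref{eq:optst} is the smallest optimal stopping time. Shrinkage of the admissible class as $t\uparrow T$ yields the time-monotonicity $V^T(t,x)\le V^T(t',x)$ for $t\ge t'$, so each space-slice of $\DD_T$ is of the form $[t^*(x),T]$ and each time-slice of $\CC_T$ is symmetric in structure under time reversal $t\mapsto T-t$.

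Second, I would rewrite the reward in a form that exposes its dependence on $\nu-\mu$. Since $G''=2(\nu-\mu)$ in the sense of distributions, the Ito--Tanaka formula gives, for any stopping time $\tau\le T-t$,
\begin{align*}
\EE_x G(B_\tau)-G(x)=\EE_x\!\int_\RR L^y_\tau\,(\nu-\mu)(dy),
\end{align*}
so that
\begin{align*}
V^T(t,x)-G(x)=\sup_{0\le\tau\le T-t}\EE_x\!\int_\RR L^y_\tau\,(\nu-\mu)(dy).
\end{align*}
From this representation one reads off two key facts. On the one hand, whenever $x\in(-\hat b_-,\hat b_+)$, the Brownian motion accumulates strictly positive $\nu$-local time before meeting $\supp\,\mu$, so the right-hand side is strictly positive for any $T-t>0$; hence $(-\hat b_-,\hat b_+)\subset\{x:(t,x)\in\CC_T\}$ for every $t<T$, which will give $s_\pm(T-t)\ge\hat b_\pm$. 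On the other hand, for $|x|$ large enough, the $\mu$-local-time cost dominates any $\nu$-gain on the horizon $T-t$, so stopping immediately is optimal: the stopping set is non-empty on both tails.

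Third (the main obstacle), I would prove that each time-slice of $\CC_T$ is an \emph{interval}. This requires ruling out the possibility that $(t,x_1),(t,x_2)\in\CC_T$ but some $x_3\in(x_1,x_2)$ belongs to $\DD_T$. I would use a coupling/pasting argument: start two copies of Brownian motion from $x_1$ and $x_3$, exploit their ordered paths until the first crossing time, and compare their continuation values using the local-time representation above together with the structure of $F_\nu-F_\mu$ on either side of $\supp\,\mu$ (guaranteed by assumption (D.1), which ensures the $t$-section is a union of at most two intervals). Combined with time-monotonicity from Step~1, this forces $\CC_T=\{(t,x):x\in(-s^T_-(T-t),s^T_+(T-t))\}$ for two non-negative left-continuous increasing functions $s^T_\pm$.

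Fourth, I would identify $s^T_\pm$ with the Rost boundaries $s_\pm$ of Theorem \ref{thm:CoxPe}. Using the characterisation of $\tau^T_*$ as the first exit from $\CC_T$ together with the Ito--Tanaka representation again, I would show that for a Brownian motion $W^\nu$ started with distribution $\nu$, the law of $W^\nu_{\tau^T_*}$ matches $\mu$ on the relevant set as $T\to\infty$, so that the set $\{(t,x):x\in(-s^T_-(T-t),s^T_+(T-t))\}$ satisfies the reversed-barrier property characterising Rost's solution. The uniqueness statement in Theorem \ref{thm:CoxPe} then forces $s^T_\pm(T-\cdot)\equiv s_\pm(T-\cdot)$, giving \eqref{eq:CC-2}. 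Finally, as $t\to T$ the horizon $T-t$ vanishes, so the continuation region collapses to the set where immediate stopping yields no local-time cost against $\mu$, namely $(-\hat b_-,\hat b_+)$; this gives $s_\pm(0+)=\hat b_\pm$. The hard part is the coupling argument in Step~3 and the precise passage from the exit distribution to the Rost characterisation in Step~4, where additional care is needed because $\mu$ may be singular and the $\nu$-local time need not be absolutely continuous.
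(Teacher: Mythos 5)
You should note at the outset that the paper offers no proof of Theorem \ref{them:OS}: it is imported wholesale from \cite{DeA15-SK} (with the barrier itself coming from \cite{Cox-Pe13}), so the benchmark is the proof in that reference, which takes an entire paper. Your proposal reproduces correctly the easy outer layer -- continuity of $V^T$, the first-entry time being the smallest optimal stopping time, the It\^o--Tanaka rewriting of $V^T-G$ as in \eqref{eq:U2}, time-monotonicity of the stopping region, and the lower bound $s_\pm(T-t)\ge\hat b_\pm$ via strict positivity of $V^T-G$ on $(-\hat b_-,\hat b_+)$ -- but the two steps carrying all the weight are announced rather than argued. In Step 3 you invoke ``a coupling/pasting argument'' to show that every $t$-section of $\CC_T$ is an interval, yet no such argument is given and it is not routine: $G$ is neither convex nor concave, $F_\nu-F_\mu$ changes sign in a way controlled only through (D.1)--(D.2), and $\mu$ may be singular, so the ordered-paths comparison of continuation values has to be pushed through the local-time representation with real work; this is precisely the structural analysis carried out in \cite{DeA15-SK}. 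In Step 4 the assertion that the time-reversed continuation region is a reversed barrier whose hitting time embeds $\mu$ (so that the uniqueness of Theorem \ref{thm:CoxPe} can be applied) \emph{is} the main theorem of \cite{DeA15-SK}; labelling it a passage ``where additional care is needed'' leaves the core of the statement unproved. You also use, without stating it, the consistency $V^T(t,x)=V^{T'}(t',x)$ whenever $T-t=T'-t'$; without this the boundaries produced at horizon $T$ could a priori depend on $T$ and could not be matched with a single couple $(s_+,s_-)$ valid for all $T$, nor would \eqref{eq:CC-2} make sense as stated.

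There is also a concrete error in Step 2: the claim that for $|x|$ large the $\mu$-local-time cost dominates so that ``the stopping set is non-empty on both tails'' is false in general. When $\supp\,\mu\cap\RR_-=\emptyset$ the paper (Remark \ref{thm:integreq}ff., i.e.\ the remark following Theorem \ref{thm:integreq}) has $s_-\equiv+\infty$, and every $t$-section of $\DD_T$ is empty on the negative half-line: there is no $\mu$-cost below, while there is a positive probability of collecting $\nu$-local time, so continuation is strictly better for every $x<0$, however large $|x|$. Similarly, your strict-positivity argument for $x$ between $a_+$ and $\hat b_+$, or when $\supp\,\nu$ touches $\pm\hat b_\pm$ (e.g.\ $\nu$ concentrated near $a_+=\hat b_+$), needs assumption (D.2) to ensure that small excursions beyond $\pm\hat b_\pm$ accrue negligible $\mu$-local time; as written, ``accumulates $\nu$-local time before meeting $\supp\,\mu$'' does not cover these configurations. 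Finally, the limit $s_\pm(0+)=\hat b_\pm$ requires a two-sided argument: the lower bound comes from your positivity claim, but the upper bound (that points outside $[-\hat b_-,\hat b_+]$ are stopped for $t$ close enough to $T$) needs a quantitative estimate showing the $\nu$-gain over a vanishing horizon cannot beat the $\mu$-cost, which the proposal does not supply.
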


For $T>0$ it is convenient to introduce the function
\begin{align}\label{eq:UT}
U^T(t,x):=V^T(t,x)-G(x),\quad (t,x)\in[0,+\infty)\times\RR.
\end{align}
Applying It\^o-Tanaka-Meyer formula to $G$ inside the expectation of \eqref{eq:V} we find an alternative representation for $U^T$ which will also be useful in the rest of the paper, i.e.
\begin{align}\label{eq:U2}
U^T(t,x)=\sup_{0\le \tau\le T-t}\int_\RR\EE_x L^y_\tau\,(\nu-\mu)(dy).
\end{align}

In order to prove that the couple $(s_+,s_-)$ solves the integral equations \eqref{eq:intEq} we use a characterisation of the function $U^T$ which is of independent interest. We provide this in a separate proposition whose proof will be given in the next section.
\begin{prop}\label{prop:integreq}
Assume that $\mu$ is atom-less. Then for any $T>0$ the following equivalent representations of \eqref{eq:UT} hold
\begin{align}
\label{eq:intU}U^T(t,x)=&\int_t^T\int_{-s_-(T-u)}^{s_+(T-u)}p(t,x,u,y)\big(\nu-\mu\big)(dy)\,ds\\[+3pt]
\label{eq:intU2}U^T(t,x)=&\int_\RR\mathds{1}_{\{y\,:\,\varphi(y)<T-t\}}\EE_x L^y_{T-t-\varphi(y)}\big(\nu-\mu\big)(dy)
\end{align}
for $(t,x)\in[0,T]\times\RR$.
\end{prop}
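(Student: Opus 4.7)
The plan is to exploit the optimal-stopping structure recalled in Theorem \ref{them:OS} together with a Feynman--Kac / generalised It\^o--Tanaka--Meyer argument. From standard regularity theory for finite-horizon optimal stopping of Brownian motion, $V^T$ is continuous on $[0,T]\times\RR$, of class $C^{1,2}$ inside $\CC_T$ where it solves the heat equation $(\partial_t+\tfrac12\partial_{xx})V^T=0$, coincides with $G$ on $\DD_T$, and obeys the smooth-fit condition (continuity of $\partial_x V^T$) across $\partial\CC_T$. Since $G'(x)=2(F_\nu(x)-F_\mu(x))$ is locally of bounded variation with distributional second derivative $\tfrac12 G''=\nu-\mu$ as a signed measure on $\RR$, the function $U^T=V^T-G$ satisfies $U^T\equiv 0$ on $\DD_T$, $U^T(T,\cdot)=0$, and (distributionally in the space variable)
\begin{align*}
(\partial_t+\tfrac12\partial_{xx})U^T=-(\nu-\mu)\,\mathds{1}_{\CC_T}.
\end{align*}

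The main computation is to apply a generalised It\^o--Tanaka--Meyer formula to $s\mapsto U^T(t+s,x+B_s)$ on $s\in[0,T-t]$. The $C^{1,2}$ regularity of $U^T$ inside $\CC_T$ together with the smooth-fit across $\partial\CC_T$ cancels any local-time contribution along the free boundary, while the singular second space derivative of $G$ produces a local-time integral against $\nu-\mu$. Taking expectations (the stochastic integral vanishes after localisation, by continuity and the finite horizon) and using $U^T(T,\cdot)=0$ yield, via the Brownian transition density $p$ and the identification of $\CC_T$ in Theorem \ref{them:OS},
\begin{align*}
U^T(t,x)=\int_t^T\!\!\int_{-s_-(T-u)}^{s_+(T-u)} p(t,x,u,y)\,(\nu-\mu)(dy)\,du,
\end{align*}
which is \eqref{eq:intU}. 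For \eqref{eq:intU2} I would apply Fubini: by definition of $\varphi$ one has $y\in(-s_-(T-u),s_+(T-u))$ iff $\varphi(y)<T-u$, so the domain of integration rewrites as $\{(u,y):t<u<T-\varphi(y)\}$; for each $y$ with $\varphi(y)<T-t$ the classical identity $\int_0^s p_r(x,y)\,dr=\EE_x L^y_s$ then gives $\int_t^{T-\varphi(y)}p(t,x,u,y)\,du=\EE_x L^y_{T-t-\varphi(y)}$, producing the second representation.

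The main obstacle is the rigorous justification of the It\^o--Tanaka step. Two complications have to be handled at the same time: first, the measure $\nu-\mu$ may be singular (e.g.\ of Cantor type), so that $G$ is only a difference of two convex functions and the local-time integral $\int_\RR L^y_{\,\cdot}\,(\nu-\mu)(dy)$ must be generated through the It\^o--Tanaka formula for such functions; second, the boundaries $s_\pm$ are merely left-continuous and may exhibit (countably many) jumps, so Peskir's local-time-space formula, which assumes continuous separating boundaries, does not apply off the shelf. A natural remedy is a regularisation argument: approximate $\mu$ by a sequence $\mu_n$ of atom-less measures with smooth strictly positive densities, for which the corresponding boundaries $s^n_\pm$ are continuous and all ingredients classically smooth; establish the representation at level $n$ via Peskir's formula; and then pass to the limit using the convergence $s^n_\pm\to s_\pm$ (as in \cite{Cox-Pe13}), the stability of $U^T$ in $\mu$ through \eqref{eq:U2}, and dominated convergence in the explicit integral on the right-hand side.
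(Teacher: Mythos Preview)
Your overall strategy matches the paper's: both establish \eqref{eq:intU} via a generalised It\^o--Tanaka--Meyer argument applied to $U^T$, relying on the smooth-fit condition \eqref{eq:pdeU03} to kill boundary contributions, and then derive \eqref{eq:intU2} from \eqref{eq:intU} by Fubini and the local-time identity $\int_0^s p_r(x,y)\,dr=\EE_x L^y_s$. The divergence is in how the technical obstacle you correctly flag is resolved.

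The paper does \emph{not} approximate $\mu$. Instead it mollifies $U^T$ directly: with standard mollifiers $f_n$ in space and $g_m$ in time it sets $U_{n,m}=U*(f_n\otimes g_m)$, applies the classical Dynkin formula to the smooth $U_{n,m}$, integrates by parts twice (using smooth fit to discard boundary terms) so that $\partial_{xx}U_{n,m}$ is expressed against $(\nu-\mu)(dy)$, and then passes to the limit $m\to\infty$, $n\to\infty$ via the occupation-time formula and dominated convergence. This works with the given $\mu$ and the actual (possibly discontinuous) boundaries $s_\pm$ throughout; the only limits are in the mollification parameters, controlled by elementary bounds on $f_n$, $g_m$ together with continuity of expected local time and the fact (from \eqref{eq;jumpb}--\eqref{eq;jumpb2}) that $\mu$ is flat across jumps of $s_\pm$.

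Your proposed regularisation---perturbing $\mu$ to $\mu_n$ with smooth strictly positive density and invoking continuity of the resulting boundaries---introduces a stability claim that is not in \cite{Cox-Pe13}: the approximation there is by \emph{piecewise constant boundaries} for a fixed $\mu$, not by varying the target law. Establishing $s^n_\pm\to s_\pm$ under $\mu_n\to\mu$ would require a separate argument (presumably via stability of the value functions $V^{T,n}\to V^T$ and their stopping sets), and passing to the limit on the right-hand side of \eqref{eq:intU} near jumps of $s_\pm$ is still delicate since the integration domains $(-s^n_-(T-u),s^n_+(T-u))$ need not converge uniformly in $u$. The paper's mollification of $U$ sidesteps both issues entirely and is the cleaner route here.
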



\section{Proof of the main theorem}\label{sec:proof}

In this section we provide the proof of Theorem \ref{thm:integreq}. This requires to show first that the value function $V^T$ of \eqref{eq:V} is $C^1$ in the whole space and then to introduce a free boundary problem. The latter allows us to obtain the representation formulae of Proposition \ref{prop:integreq} which finally lead to the result in Theorem \ref{thm:integreq}. These steps will be taken in three separate sub-sections.

\subsubsection*{3.1~~$C^1$ regularity of the value function and a free boundary problem}
We start by recalling that Proposition 3.1 in \cite{DeA15-SK} guarantees that
\begin{align}\label{eq:Vcont}
V^T\in C([0,T]\times\RR).
\end{align} 
Moreover it has been shown in \cite{DeA15-SK}, and it is one of the paper's main technical results, that the time-derivative $V^T_t$ of the value function is indeed continuous on $[0,T)\times\RR$. However it is also mentioned there that when $\mu$ has atoms the continuity of $V^T_x(t,\cdot)$ could break down across the boundaries $s_\pm$ (let alone continuity of $V^T_x(\cdot,\cdot)$). For this reason here we focus only on continuous target measures $\mu$.

First we summarise the results of Lemma 3.7 of \cite{DeA15-SK} which will be needed in the following Proposition.
\begin{lemma}\label{lem:reg-bd}
For any $(t,x)\in \partial\CC_T\cap\{t<T\}$ and any sequence $(t_n,x_n)_{n}\subset \CC_T$ such that $(t_n,x_n)\to (t,x)$ as $n\to\infty$ it holds
\begin{align}\label{lim2}
\lim_{n\to0} \tau^T_*(t_n,x_n)=0,\qquad \PP-a.s.
\end{align}
\end{lemma}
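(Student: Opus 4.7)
The plan is to combine the geometric description of $\CC_T$ from Theorem \ref{them:OS} with a one-sided hitting argument for Brownian motion, handling the potential jumps of the boundary via the countability of its discontinuity set. By Theorem \ref{them:OS}, any $(t,x)\in \partial\CC_T\cap\{t<T\}$ satisfies either $x=s_+(T-t)$ or $x=-s_-(T-t)$; by symmetry I treat only the first case. Set $\tau_n:=\tau^T_*(t_n,x_n)$ and introduce the one-sided hitting time
\begin{equation*}
\sigma_n:=\inf\bigl\{s\in[0,T-t_n]\,:\,x_n+B_s\ge s_+(T-t_n-s)\bigr\}.
\end{equation*}
Since the opposite boundary $-s_-$ is bounded away from $x$, the inequality $\tau_n\le\sigma_n$ holds for all $n$ large enough, and it suffices to prove $\sigma_n\to 0$ $\PP$-almost surely.

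\smallskip

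The first substantive step is a regularity statement at the limit point $(t,x)$: the stopping time $\sigma:=\inf\{s>0\,:\,x+B_s>s_+(T-t-s)\}$ equals $0$ $\PP$-a.s. Indeed, monotonicity of $s_+$ gives $s_+(T-t-s)\le s_+(T-t)=x$ for every $s\in[0,T-t]$, so the event $\{B_s>0\}$ is contained in $\{x+B_s>s_+(T-t-s)\}$, and the classical fact $\PP(\inf\{s>0:B_s>0\}=0)=1$ yields $\sigma=0$ a.s. Furthermore, left-continuity of $s_+$ makes the map $s\mapsto s_+(T-t-s)$ right-continuous, so the random set $\mathcal{R}(\omega):=\{s>0\,:\,x+B_s(\omega)>s_+(T-t-s)\}$ contains a right-neighborhood of each of its points and thus has positive Lebesgue measure in every $(0,\varepsilon)$, outside a fixed $\PP$-null set.

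\smallskip

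The core of the argument is then to transfer this pointwise regularity to the approaching sequence. Being monotone, $s_+$ has at most countably many discontinuities, so the set $J$ of $s$-values for which $T-t-s$ is a discontinuity point of $s_+$ is countable and Lebesgue-null. For $\omega$ outside the null set of the previous step and any $\varepsilon>0$ one can select a witness $s^*(\omega)\in\mathcal{R}(\omega)\cap(0,\varepsilon)\setminus J$; at such $s^*$ the strict inequality $x+B_{s^*}(\omega)>s_+(T-t-s^*)$ holds while $s_+$ is continuous at $T-t-s^*$. Using $(t_n,x_n)\to(t,x)$ together with this continuity, both sides of the inequality perturb continuously, giving $x_n+B_{s^*}(\omega)>s_+(T-t_n-s^*)$ for all $n$ large enough, and hence $\sigma_n(\omega)\le s^*<\varepsilon$. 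Letting $\varepsilon$ decrease to $0$ along a countable sequence completes the proof. The main obstacle is precisely the possible presence of jumps of $s_+$: a naive perturbation argument with a fixed witness time fails at discontinuity points of the boundary, and the selection of $s^*$ off the countable set $J$ is the key ingredient, exploiting the one-sided continuity of $s_+$ guaranteed by Theorem \ref{thm:CoxPe}.
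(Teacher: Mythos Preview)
Your argument is correct. Note, however, that the paper does not give its own proof of this lemma: it merely quotes the statement from \cite[Lemma~3.7]{DeA15-SK}, so there is no in-paper argument to compare against. Your approach is self-contained and elementary: you dominate $\tau^T_*(t_n,x_n)$ by the one-sided hitting time $\sigma_n$ of the upper barrier, observe that the limit point $(t,x)$ is instantaneously regular for that barrier because monotonicity of $s_+$ gives $s_+(T-t-s)\le x$ and hence $\{B_s>0\}\subset\mathcal R(\omega)$, and then pass to the approximating sequence by choosing a witness time $s^*$ at which $s_+$ is continuous so that both sides of the strict inequality perturb well. The selection of $s^*$ off the countable discontinuity set of $s_+$ is precisely the device that handles the possible jumps of the boundary.

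Two minor remarks. First, the inequality $\tau_n\le\sigma_n$ actually holds for \emph{every} $n$, since $\tau_n$ is the minimum of the upper and lower one-sided exit times; the appeal to the lower boundary being ``bounded away'' is unnecessary. Second, the opening claim that every upper-boundary point satisfies $x=s_+(T-t)$ is slightly imprecise when $s_+$ jumps at $T-t$, as $\partial\CC_T$ then contains the vertical segment $\{t\}\times[s_+(T-t),\,s_+((T-t)+)]$; but your proof only uses $x\ge s_+(T-t)$, which holds for all such points, so the argument goes through unchanged (indeed it becomes easier when the inequality is strict).
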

Next we provide the $C^1$ regularity of $V^T$. 
\begin{prop}\label{prop:sm-fit}
Assume that $\mu$ is atom-less. Then for any $T>0$ it holds $V^T\in C^1([0,T)\times\RR)$.
\end{prop}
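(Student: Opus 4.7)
The plan is to decompose $[0,T)\times\RR$ into three parts, the open continuation set $\CC_T$, the interior of the stopping set $\DD_T$, and the boundary $\partial\CC_T\cap\{t<T\}$, and to establish existence and continuity of $V^T_x$ on each before gluing them together. Continuity of $V^T$ and of $V^T_t$ being already granted, only the spatial derivative needs attention. Inside $\CC_T$, the Markovian martingale property of $s\mapsto V^T(t+s,x+B_s)$ stopped before exit yields $V^T\in C^\infty(\CC_T)$ by standard arguments, so $V^T_x$ is well defined and continuous there. On the interior of $\DD_T$ we have $V^T=G$; furthermore every such point satisfies $|x|>s_\pm(T-t)\ge\hat b_\pm\ge a_\pm$, so $F_\nu(x)$ is locally constant (either $0$ or $1$) while $F_\mu$ is continuous since $\mu$ is atom-less. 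Consequently $G'=2(F_\nu-F_\mu)$, and hence $V^T_x$, is continuous on this open set.

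The core step is smooth fit at $\partial\CC_T\cap\{t<T\}$. Fix such a point $(t_0,x_0)$ with $x_0=s_+(T-t_0)$ (the case $x_0=-s_-(T-t_0)$ being symmetric). Corollary \ref{cor:flat} gives strict monotonicity of $s_+$ while Theorem \ref{them:OS} yields $s_+(0+)=\hat b_+$, so $T-t_0>0$ forces $x_0>\hat b_+\ge a_+$ and $G'$ is continuous at $x_0$. Since $V^T=G$ on a right-neighbourhood of $x_0$, the right-derivative equals $G'(x_0)$ trivially. For the left-derivative I would use a two-sided squeeze: on the one hand the sub-optimality $V^T(t_0,x_0-\varepsilon)\ge G(x_0-\varepsilon)$ gives
\[
\frac{V^T(t_0,x_0)-V^T(t_0,x_0-\varepsilon)}{\varepsilon}\le\frac{G(x_0)-G(x_0-\varepsilon)}{\varepsilon}\to G'(x_0)\quad\text{as }\varepsilon\downarrow 0;
\]
on the other hand, taking $\tau_\varepsilon:=\tau^T_*(t_0,x_0-\varepsilon)$ as a candidate stopping rule from $(t_0,x_0)$ and exploiting the absolute continuity of $G$ yields
\[
V^T(t_0,x_0)-V^T(t_0,x_0-\varepsilon)\ge\EE\big[G(x_0+B_{\tau_\varepsilon})-G(x_0-\varepsilon+B_{\tau_\varepsilon})\big]=\int_{-\varepsilon}^{0}\EE\,G'(x_0+y+B_{\tau_\varepsilon})\,dy.
\]
Dividing by $\varepsilon$ and letting $\varepsilon\downarrow 0$, Lemma \ref{lem:reg-bd} supplies $\tau_\varepsilon\to 0$ almost surely, whereupon boundedness of $G'$, the dominated convergence theorem and continuity of $G'$ at $x_0$ produce the matching lower bound $G'(x_0)$. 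The two bounds coincide, so the left-derivative also equals $G'(x_0)$ and smooth fit is proved pointwise.

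To upgrade this pointwise smooth fit to joint continuity of $V^T_x$ on $[0,T)\times\RR$, the same probabilistic construction should be repeated along an arbitrary approach sequence $(t_n,x_n)\to(t_0,x_0)\in\partial\CC_T$ inside $\CC_T$, using $\tau^T_*(t_n,x_n)$ in place of $\tau_\varepsilon$ and again invoking Lemma \ref{lem:reg-bd} together with dominated convergence to conclude $V^T_x(t_n,x_n)\to G'(x_0)$. I expect the main obstacle throughout to be the control of $G'$ at the boundary: since $\nu$ is not assumed atom-less, $G'$ could in principle jump at $\pm a_\pm$, but this is precisely precluded on $\partial\CC_T\cap\{t<T\}$ by the combined use of the atom-less assumption on $\mu$, Corollary \ref{cor:flat} and the initial condition $s_\pm(0+)=\hat b_\pm$. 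Once that continuity is secured, the remainder of the proof amounts to a careful passage to the limit in the two probabilistic estimates above.
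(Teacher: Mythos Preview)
Your proposal is correct and follows essentially the same route as the paper. Both arguments defer $V^T_t$ to \cite{DeA15-SK}, note $V^T\in C^{1,2}$ inside $\CC_T$, observe that on $\DD_T\cap\{t<T\}$ one has $V^T_x=G'=2(F_\nu-F_\mu)$ with $F_\nu$ locally constant there (since $s_\pm(T-t)>\hat b_\pm\ge a_\pm$ by Corollary~\ref{cor:flat}), and then handle the boundary via a probabilistic squeeze combined with Lemma~\ref{lem:reg-bd} and dominated convergence. The only cosmetic difference is organisational: you first prove pointwise smooth fit at $(t_0,x_0)$ using $\tau_\varepsilon=\tau^T_*(t_0,x_0-\varepsilon)$ and then upgrade to joint continuity along sequences $(t_n,x_n)\in\CC_T$, whereas the paper skips the pointwise step and goes directly to the formula $V^T_x(t_n,x_n)=\EE\,G'(B^{x_n}_{\tau_n})$ at interior points (via the same two-sided comparison with the optimal time $\tau_n$) before letting $(t_n,x_n)\to(t_0,x_0)$. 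Your use of the integral representation $G(b)-G(a)=\int_a^b G'(y)\,dy$ in place of the paper's mean-value theorem is arguably cleaner, since $G$ is only Lipschitz and $G'$ need not exist at atoms of $\nu$.
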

\begin{proof}
$(a).$ As already mentioned one can check \cite{DeA15-SK} for $V^T_t\in C([0,T)\times\RR)$.
\vspace{+5pt}

$(b).$ Here we prove that $V^T_x\in C([0,T)\times\RR)$. The claim is obvious in the interior of the continuation set $\CC_T$ where it is well known that $V^T\in C^{1,2}$ (see \cite[Sec.~7.1]{Pes-Shir}). In the interior of the stopping set $\DD_T$ one has $V^T=G$ and $V^T_x=G'=2(F_\nu-F_{\mu})$. The latter is continuous in $\DD_T\cap\{t<T\}$ because for all $t\in[0,T)$ one has $s_\pm(t)>\hat{b}_\pm$ due to Corollary \ref{cor:flat}, and therefore
\begin{align*}
\text{$F_\nu(x)=0$~for $x\le -s_-(t)$ and $F_\nu(x)=1$ for $x\ge s_+(t)$,}
\end{align*} 
for all $t\in[0,T)$ such that $s_+(t)<+\infty$ and/or $s_-(t)<+\infty$.

Then we must only consider points along the two boundaries. Let $(t,x)\in\partial\CC_T\cap\{t<T\}$ be arbitrary but fixed and let $(t_n,x_n)_n\subset \CC_T$ be a sequence of points converging to $(t,x)$ as $n\to \infty$. Fix an arbitrary $n\in\NN$ and for simplicity denote $\tau_n:=\tau^T_*(t_n,x_n)$ (see \eqref{eq:optst}). 

Since stopping at $\tau_n$ is optimal for the initial conditions $(t_n,x_n)$, then for $h>0$ such that $(t_n,x_n+h)\in\CC_T$ we use the mean value theorem to obtain 
\begin{align}
\label{Vx1}&\frac{V^T(t_n,x_n+h)-V^T(t_n,x_n)}{h}\ge \frac{1}{h}\EE\Big[G(X^{x_n+h}_{\tau_{n}})-G(X^{x_n}_{\tau_{n}})\Big]=\EE G'(\xi^h_1)
\end{align}
with $\xi^h_1\in(X^{x_n}_{\tau_n},X^{x_n+h}_{\tau_n})$, $\PP$-a.s. On the other hand by the same arguments we also obtain
\begin{align}
\label{Vx2}&\frac{V^T(t_n,x_n)-V^T(t_n,x_n-h)}{h}\le \frac{1}{h}\EE\Big[G(X^{x_n}_{\tau_{n}})-G(X^{x_n-h}_{\tau_{n}})\Big]=\EE G'(\xi^h_2)
\end{align}
with $\xi^h_2\in(X^{x_n-h}_{\tau_{n}},X^{x_n}_{\tau_{n}})$, $\PP$-a.s. 

From continuity of the sample paths we get that $X^{x_n+h}_{\tau_n}$ and $X^{x_n-h}_{\tau_{n}}$ converge to $X^{x_n}_{\tau_n}$ as $h\to 0$, $\PP$-a.s., hence $\xi^h_1,\,\xi^h_2\to X^{x_n}_{\tau_n}$ as $h\to 0$ as well. Recall that $V_x$ exists inside $\CC_T$ and $G$ is Lipschitz on $\RR$, so that dominated convergence and \eqref{Vx1} and \eqref{Vx2} allow us to conclude
\begin{align}\label{Vx3}
V^T_x(t_n,x_n)=\EE G'(X^{x_n}_{\tau_n}).
\end{align}
Now we use continuity of sample paths and Lemma \ref{lem:reg-bd} to obtain $X^{x_n}_{\tau_n}\to x$ as $n\to\infty$. Hence another application of dominated convergence and \eqref{Vx3} give
\begin{align}
\lim_{n\to\infty}V^T_x(t_n,x_n)=G'(x).
\end{align}
Since the point $(t,x)$ and the sequence $(t_n,x_n)$ were arbitrary we conclude that $V^T_x$ is continuous across the boundaries and hence everywhere in $[0,T)\times \RR$.
\end{proof}

It follows from the standard theory (see for instance \cite[Sec.~7.1]{Pes-Shir}) that \eqref{eq:Vcont} and the strong Markov property imply that $V^T\in C^{1,2}$ in $\CC_T$ and it solves
\begin{align}
\label{eq:pdeV01}& \big(V^T_t+\tfrac{1}{2}V^T_{xx}\big)(t,x)=0, &(t,x)\in \CC_T\\[+3pt]
\label{eq:pdeV02}& V^T(t,x)=G(x), &(t,x)\in\partial\CC_T
\end{align}
where $\CC_T$ may be expressed in terms of the curves $s_\pm (T-\,\cdot)$ as in \eqref{eq:CC-2}. It now follows from Proposition \ref{prop:sm-fit} that if $\mu$ is atom-less then $V^T$ is also globally $C^1$ in $[0,T)\times\RR$ and two more boundary conditions hold as well, i.e.
\begin{align}
\label{eq:pdeV03}& V^T_x(t,x)=G'(x), &(t,x)\in\partial\CC_T\cap\{t<T\}\\[+3pt]
\label{eq:pdeV03b}& V^T_t(t,x)=0, &(t,x)\in\partial\CC_T\cap\{t<T\}.
\end{align}

It is important to notice that $V^T_t$ is continuous on $[0,T)\times\RR$ even without assuming that $\mu$ is atom-less (see \cite[Prop.~3.15]{DeA15-SK}). Then $V^T_{xx}$ is continuous on the closure of the set $\CC_T\cap\{t<T-\eps\}$ for any $\eps>0$, due to \eqref{eq:pdeV01} and continuity of $V^T_t$. In $\DD_T$ we have $V^T=G^T$ and therefore $V^T_{xx}(t,dx)=G''(dx)$ exists as a measure therein. 

The above considerations imply that the function $U^T=V^T-G$ (see \eqref{eq:UT}) is continuously differentiable in $t$ and $U^T_{xx}(t,\,\cdot\,)$ defines a measure on $\RR$ for each fixed $t\in[0,T)$. Notice that if $\mu$ is atom-less then thanks to \eqref{eq:pdeV03} such measure charges no mass at $\partial\CC_T\cap\{t<T\}$, hence the above boundary value problem can be stated for $U^T$ as 
\begin{align}
\label{eq:pdeU01}& \big(U^T_t+\tfrac{1}{2}U^T_{xx}\big)(t,x)=-(\nu-\mu)(dx), &(t,x)\in \CC_T\\[+3pt]
\label{eq:pdeU02}& U^T(t,x)=0, &(t,x)\in\partial\CC_T\\[+3pt]
\label{eq:pdeU03}& U^T_x(t,x)=U^T_t(t,x)=0, &(t,x)\in\partial\CC_T\cap\{t<T\}
\end{align}
where \eqref{eq:pdeU01} holds in the sense of distributions.

\subsubsection*{3.2~~A representation of the value function}

Here we prove Proposition \ref{prop:integreq}. The integral representation \eqref{eq:intU} for $U^T$ easily yields the integral equations for the boundaries $s_\pm$ and it is therefore instrumental for our main result, i.e.~Theorem \ref{thm:integreq}. Moreover some of the concepts introduced in the proof below will be needed when proving the latter theorem. 

\begin{proof}[Proof of Proposition \ref{prop:integreq}]
For the sake of this proof it is convenient to denote $b_\pm(t):=s_\pm(T-t)$ for $t\in[0,T]$ and use $b_\pm$ instead of $s_\pm$. Similarly we drop the apex $T$ and denote $U=U^T$.

The representation \eqref{eq:intU} for $U^T$ is obtained by an application of an extension of It\^o-Tanaka-Meyer formula to the time-space framework which is inspired by \cite{AJKY98}. However since the result of \cite[Prop.~4]{AJKY98} cannot be applied directly to our setting we provide a full proof for completeness.

We start by recalling that for each $t\in[0,T)$ the map $x\mapsto U_{x}(t,x)$ may be seen as the cumulative distribution function of a signed measure on $\RR$ with support on $[-b_-(t),b_+(t)]$. In fact we set $U_{xx}(t,(a,b]):=U_x(t,b)-U_x(t,a)$ for any interval $(a,b)\subset\RR$ (recall that $F_\nu$ is only right continuous) and we notice that due to \eqref{eq:pdeU03} $U_{xx}(t,\{\pm b_\pm(t)\})=0$ for all $t\in[0,T)$, i.e.~$U_{xx}(t,dx)$ has no atoms at the boundary points $b_\pm(t)$. It then follows
\begin{align}\label{eq:ito00}
U_{xx}(t,dx)=\mathds{1}_{\{x\in(-b_-(t),b_+(t))\}}\Big(V_{xx}(t,x)dx-2(\nu-\mu)(dx)\Big),\quad (t,x)\in[0,T)\times\RR.
\end{align}

We continuously extend $U$ to $\RR^2$ by taking $U(t,x)=U(0,x)$ for $t<0$ and $U(t,x)=U(T,x)=0$ for $t>T$. Then we pick positive functions $f,g\in C^\infty_c(\RR)$ such that
\begin{align*}
\int_\RR f(x)dx=\int_\RR g(x)dx=1
\end{align*}
and for $n\in\NN$ we set $f_n(x):=nf(nx)$, $g_n(x)=ng(nx)$. We use standard mollification to obtain a double indexed sequence of functions $(U_{n,m})_{n,m\in\NN^2}\subset C^\infty_c(\RR^2)$ defined by
\begin{align}\label{eq:ito01}
U_{n,m}(t,x):=\int_\RR\int_\RR U(s,y)f_n(x-y)g_m(t-s)\,dy\,ds,\quad (t,x)\in\RR^2.
\end{align}
To avoid technicalities related to the endpoints of the time domain fix $(t,x)\in (\eps,T-\eps)\times\RR$, $\eps>0$ arbitrary, then by applying classical Dynkin's formula for each $n$ and $m$ we find the expression
\begin{align}\label{eq:ito02}
U_{n,m}(t,x)=\EE\Big[U_{n,m}(T-\eps,B^{t,x}_{T-\eps})-\int_t^{T-\eps}\big(\tfrac{\partial}{\partial\,t} U_{n,m}+\tfrac{1}{2}\tfrac{\partial^2}{\partial\,x^2} U_{n,m}\big)(s,B^{t,x}_s)ds\Big].
\end{align}
By the standard properties of mollifiers it is not hard to see that taking limits as $n,m\to\infty$ and as $\eps\to0$ one has $U_{n,m}(t,x)\to U(t,x)$, $\EE\big[U_{n,m}(T-\eps,B^{t,x}_{T-\eps})]\to \EE\big[U(T,B^{t,x}_{T})]=0$.

It only remains to analyse the integral term.
Note that when extending $U$ to $\RR^2$ we have implicitly taken $b_\pm(t)=b_\pm(0)$ for $t<0$ and $b_\pm(t)=0$ for $t>T$. Note as well that there is no loss of generality assuming that the index $m$ in \eqref{eq:ito02} is large enough to have $g_m$ supported in $(-\eps,+\eps)$. Then for $(s,z)\in(t,T-\eps)\times\RR$ it holds
\begin{align}
\big(\tfrac{\partial^2}{\partial\,x^2}U_{n,m}\big)(s,z)=&\int_\RR\int^{b_+(v)}_{-b_-(v)}U(v,y)f''_n(z-y)g_m(s-v)\,dy\,dv\\
\big(\tfrac{\partial}{\partial\,t}U_{n,m}\big)(s,z)=&\int_\RR\int^{b_+(v)}_{-b_-(v)}U(v,y)f_n(z-y)g'_m(s-v)\,dy\,dv
\end{align}
where the integrals in $dv$ must only be taken for $v\in(s-\eps,s+\eps)\subset (0,T)$.

Use integration by parts twice, the smooth-fit \eqref{eq:pdeU03}, and \eqref{eq:ito00} to obtain
\begin{align}\label{eq:ito03}
\big(\tfrac{\partial^2}{\partial\,x^2}U_{n,m}\big)(s,z)
=&\int_\RR\Big(\int^{b_+(v)}_{-b_-(v)}f_n(z-y)U_{xx}(v,dy)\Big)g_m(s-v)\,dv\\
=&\int_\RR\Big(\int^{b_+(v)}_{-b_-(v)}f_n(z-y)\big(V_{xx}(v,y)dy-2(\nu-\mu)(dy)\big)\Big)g_m(s-v)\,dv\nonumber
\end{align}
and similarly
\begin{align}\label{eq:ito04}
\big(\tfrac{\partial}{\partial\,t}U_{n,m}\big)(s,z)=\int_\RR\int^{b_+(v)}_{-b_-(v)}U_t(v,y)f_n(z-y)g_m(s-v)\,dy\,dv.
\end{align}
Since $U_t=V_t$ and \eqref{eq:pdeV01} holds, then adding up \eqref{eq:ito03} and \eqref{eq:ito04} (where now $z=B^{t,x}_s$) the above gives
\begin{align}\label{eq:ito05}
\EE&\Big[\int_t^{T-\eps}\big(\tfrac{\partial}{\partial\,t} U_{n,m}+\tfrac{1}{2}\tfrac{\partial^2}{\partial\,x^2} U_{n,m}\big)(s,B^{t,x}_s)ds\Big]\\
=&-\EE\Big[\int_t^{T-\eps}\Big\{\int_\RR\Big(\int^{b_+(v)}_{-b_-(v)}f_n(B^{t,x}_s-y)(\nu-\mu)(dy)\Big)g_m(s-v)\,dv\Big\}ds\Big]\nonumber\\
=&-\int_\RR\int_t^{T-\eps}\hspace{-5pt}\Big\{\int_\RR\Big(\int^{b_+(v)}_{-b_-(v)}f_n(z-y)(\nu-\mu)(dy)\Big)g_m(s-v)\,dv\Big\}p(t,x,s,z)ds\,dz.\nonumber
\end{align}
Recall that $\supp\,g_m\subset (-\eps,+\eps)$. For $(s,z)\in(t,T-\eps)\times\RR$ we set
\begin{align}
\label{eq:lam00}&\Lambda_n(v,z):=\int^{b_+(v)}_{-b_-(v)}f_n(z-y)(\nu-\mu)(dy),\quad v\in (s-\eps,s+\eps)\\
\label{eq:lam01}&\Lambda_{n,m}(s,z):=\int_\RR\Lambda_n(v,z)g_m(s-v)dv
\end{align}
so that we rewrite \eqref{eq:ito05} in the form
\begin{align}\label{eq:ito06}
\EE\Big[\int_t^{T-\eps}\big(\tfrac{\partial}{\partial\,t} U_{n,m}+\tfrac{1}{2}\tfrac{\partial^2}{\partial\,x^2} U_{n,m}\big)(s,B^{t,x}_s)ds\Big]=-\int_\RR\int_t^{T-\eps}\Lambda_{n,m}(s,z)p(t,x,s,z)ds\,dz.
\end{align}

For fixed $n$ and for any $z\in\RR$ and $s\in(t,T-\eps)$ the map $v\mapsto \Lambda_n(v,z)$ is bounded and continuous on $(s-\eps,s+\eps)$. This follows since $\mu$ is flat at possible jumps of $b_\pm$ (cf.~\eqref{eq;jumpb}, \eqref{eq;jumpb2} of Theorem 2.1 and recall that $b_\pm(\cdot)=s_\pm(T-\cdot)$) and by observing that 
\begin{align*}
\int^{b_+(v)}_{-b_-(v)}f_n(z-y)\nu(dy)=\int^{a_+}_{-a_-}f_n(z-y)\nu(dy).
\end{align*}
Notice also that $g_m(s-\,\cdot\,)$ converges weakly to $\delta(s-\cdot)$ as $m\to \infty$ and hence for all $z\in\RR$ and $s\in(t,T-\eps)$ we get
\begin{align}\label{eq:lam02}
\lim_{m\to\infty}\Lambda_{n,m}(s,z)=\Lambda_n(s,z).
\end{align}

For fixed $n$ it holds $\|f_n\|_\infty\le K_n$ for suitably large $K_n>0$ and hence it also holds $\big|\Lambda_{n,m}(s,z)\big|\le 2K_n$ uniformly for $(s,z)\in(t,T-\eps)\times\RR$. We keep $n$ fixed and take limits as $m\to\infty$, then dominated convergence and \eqref{eq:lam02} give
\begin{align}\label{eq:lam03}
\lim_{m\to\infty}\int_\RR\int_t^{T-\eps}\Lambda_{n,m}(s,z)p(t,x,s,z)ds\,dz=\int_\RR\int_t^{T-\eps}\Lambda_{n}(s,z)p(t,x,s,z)ds\,dz.
\end{align}

In order to take limits as $n\to\infty$ we analyse the the right-hand side of the expression above. In particular we recall from \eqref{eq:invbarrier00} that
\begin{align}\label{eq:equiv}
y\in (-b_-(s),b_+(s))\Leftrightarrow y\in (-s_-(T-s),s_+(T-s))\Leftrightarrow \varphi(y)<T-s.
\end{align}
Hence Fubini's theorem and the occupation formula for local time give
\begin{align}\label{eq:lam04}
\int_\RR\int_t^{T-\eps}&\Lambda_{n}(s,z)p(t,x,s,z)ds\,dz\\
=&\EE_{t,x}\Big[\int_t^{T-\eps}\hspace{-5pt}\int_{-b_-(s)}^{b_+(s)}f_n(B_s-y)(\nu-\mu)(dy)\,ds\Big]\nonumber\\
=&\int_\RR\EE_{t,x}\Big[\int_t^{T-\eps}\mathds{1}_{\{s<T-\varphi(y)\}}f_n(B_s-y)ds\Big](\nu-\mu)(dy)\nonumber\\
=&\int_\RR\EE_{t,x}\Big[\int_t^{(T-\eps)\wedge([T-\varphi(y)]\vee t)}f_n(B_s-y)ds\Big](\nu-\mu)(dy)\nonumber\\
=&\int_\RR\Big(\int_\RR f_n(z-y)\EE_{t,x}\big[L^z_{(T-\eps)\wedge([T-\varphi(y)]\vee t)}\big]dz\Big)(\nu-\mu)(dy)\nonumber
\end{align}
where we recall that $(L^z_s)_{s\ge0}$ is the local time of the Brownian motion at a point $z\in\RR$.

With no loss of generality and thanks to the definition of $f_n$, we may assume that there exists a compact $K\subset\RR$ such that $\supp\, f_n\subset K$ for all $n$. For fixed $t$ and $x$ the quantity $\EE_{t,x}\big[L^z_{(T-\eps)\wedge([T-\varphi(y)]\vee t)}\big]$ is bounded uniformly with respect to all $y,z\in\RR$ such that $y-z\in K$ (use for example It\^o-Tanaka's formula to verify the claim). Moreover, for fixed $t$, $x$ and $y$ the map $z\mapsto \EE_{t,x}\big[L^z_{(T-\eps)\wedge([T-\varphi(y)]\vee t)}\big]$ is bounded and continuous for $z\in\RR$ such that $z-y\in K$. Finally, for each $y\in\RR$ one has $f_n(\cdot-y)\to \delta(\cdot - y)$ weakly as $n\to \infty$. 

The above considerations allow us to use dominated convergence (with respect to the measure $\nu-\mu$) and \eqref{eq:lam04} to obtain 
\begin{align}\label{eq:lam05}
\lim_{n\to\infty} \int_\RR\int_t^{T-\eps}&\Lambda_{n}(s,z)p(t,x,s,z)ds\,dz=\int_\RR\EE_{t,x}\big[L^y_{(T-\eps)\wedge([T-\varphi(y)]\vee t)}\big](\nu-\mu)(dy).
\end{align}

Now, we collect \eqref{eq:ito02} and \eqref{eq:ito06}, we take limits as $m\to\infty$ first, then as $n\to\infty$ and finally as $\eps\to 0$. Then using \eqref{eq:lam03} and \eqref{eq:lam05} we obtain
\begin{align}\label{eq:int00}
U(t,x)=&\int_\RR\EE_{t,x}\big[L^y_{[T-\varphi(y)]\vee t}\big](\nu-\mu)(dy)=\int_\RR \mathds{1}_{\{y:\varphi(y)<T-t\}}\EE_{t,x}\big[L^y_{T-\varphi(y)}\big](\nu-\mu)(dy)\\
=&\int_\RR \mathds{1}_{\{y:\varphi(y)<T-t\}}\EE_{x}\big[L^y_{T-t-\varphi(y)}\big](\nu-\mu)(dy).\nonumber
\end{align}
The equivalent expression \eqref{eq:intU} can be obtained by recalling the well known representation for the expectation of the Brownian local time
\begin{align}\label{eq:Eloct}
\EE_{x}\big[L^y_{T-t-\varphi(y)}\big]
=\int_t^{T-\varphi(y)}p(t,x,s,y)ds.
\end{align}
Then by using once again \eqref{eq:equiv} we obtain
\begin{align}\label{eq:int01}
U(t,x)=&\int_\RR\Big(\int_t^T\mathds{1}_{\{s<T-\varphi(y)\}}p(t,x,s,y)ds\Big)(\nu-\mu)(dy)\\
=&\int_\RR\Big(\int_t^T\mathds{1}_{\{y\in(-b_-(s),b_+(s))\}}
p(t,x,s,y)ds\Big)(\nu-\mu)(dy)\nonumber
\end{align}
and hence \eqref{eq:intU} follows by a simple application of Fubini's theorem and by putting $b_\pm(t)=s_\pm(T-t)$.
\end{proof}

\subsubsection*{3.3~~Uniqueness of the solution for the integral equations}

We can now prove Theorem \ref{thm:integreq}. 

\begin{proof}[Proof of Theorem \ref{thm:integreq}]
We develop the proof considering a setting with two boundaries, i.e.~$\mu$ supported on both $\RR_-$ and $\RR_+$. A simpler version of the same arguments holds in the cases with a single boundary.

First we notice that the system \eqref{eq:intEq} of integral equations is easily obtained by taking $x=\pm s_\pm(T-t)$, $t<T$ in \eqref{eq:intU} and observing that $U^T(t,\pm s_\pm(T-t))=0$. 
For the rest of this proof it is convenient to denote $b_\pm(t):=s_\pm(T-t)$ for $t\in[0,T]$ and use $b_\pm$ instead of $s_\pm$. Also we drop the apex $T$ and denote $U=U^T$.

The proof follows a contradiction argument. Let us assume that there exists a couple of functions $(r_-,r_+)$ with the following properties: $r_\pm:\RR_+\to\RR_+\cup\{+\infty\}$, increasing, left-continuous and such that $(r_-,r_+)$ solves \eqref{eq:intEq} with terminal conditions $r_\pm(0+)=\hat{b}_\pm$. Notice that by monotonicity $r_\pm(T-t)\ge a_\pm$ for $t\in[0,T)$. As in \eqref{eq:invbarrier00} we can define a generalised inverse of $r_\pm$ and we denote it by $\psi:\RR\to[0,\infty)$.

Again it is convenient to consider a time reversal and denote $c_\pm(t):=r_\pm(T-t)$, $t\in[0,T]$. Motivated by \eqref{eq:intU2} let us define a function
\begin{align}\label{eq:Uc00}
U^c(t,x):=\int_\RR\EE_xL^y_{(T-t-\psi(y))^+}(\nu-\mu)(dy),\quad(t,x)\in[0,T]\times\RR
\end{align}
and notice that for $\psi=\varphi$ we simply have $U^c=U$ (observe that $\mathds{1}_{\{y:\psi(y)<T-t\}}L^y_{T-t-\psi(y)}=L^y_{(T-t-\psi(y))^+}$). By assumption $(c_-,c_+)$ solves \eqref{eq:intEq} and hence
\begin{align}\label{eq:intEqc}
U^c(t,\pm c_\pm(t))=0,\quad t\in[0,T),\:\:\:\text{and}\:\:\: U^c(T,x)=0, \quad x\in\RR.
\end{align}

Using that $\EE_xL^y_{(T-t-\psi(y))^+}=\EE_x\big|B_{(T-t-\psi(y))^+}-y\big|-|x-y|$ it is not difficult to prove that $U^c\in C([0,T]\times\RR)$. Now we want to show that the process $Y^c$ defined as
\begin{align}\label{eq:Yc}
Y^c_s:=U^c(t+s,B_s)+\int_\RR L^y_{s\wedge(T-t-\psi(y))^+}(\nu-\mu)(dy),\quad s\in[0,T-t]
\end{align}
is a $\PP_x$-martingale for any $x\in\RR$ and $t\in[0,T)$. By the definition \eqref{eq:Uc00} it follows
\begin{align}\label{eq:Uc01}
\EE_x U^c(t+s,B_s)=\EE_x\Big[\int_\RR\mathds{1}_{\{t+s<T-\psi(y)\}}\EE_{B_s} \big[L^y_{(T-(t+s)-\psi(y))}\big](\nu-\mu)(dy)\Big].
\end{align}
Strong Markov property and It\^o-Tanaka's formula give
\begin{align}\label{eq:Uc02}
\EE_{B_s} L^y_u=\EE_x\Big[\big|B_{s+u}-y\big|\Big|\cF_s\Big]-\big|B_s-y\big|=\EE_x\big(L^y_{s+u}\big|\cF_s\big)-L^y_s-M_s, \quad\PP_x-\text{a.s.}
\end{align}
for all $u\ge 0$ and with $(M_s)_{s\ge0}$ a $\PP_x$-martingale with $M_0=0$, $\PP_x$-a.s. In particular for $u=T-(t+s)-\psi(y)$, by Fubini's theorem and \eqref{eq:Uc02} we obtain
\begin{align}\label{eq:Uc03}
\EE_x U^c(t+s,B_s)=\int_\RR\mathds{1}_{\{s<T-t-\psi(y)\}}\EE_x\big[L^y_{(T-t-\psi(y))}-L^y_s\big](\nu-\mu)(dy).
\end{align}
Now we add and subtract $U^c(t,x)$ from the right-hand side of the above expression, use \eqref{eq:Uc00} and simplify the indicator variables to get
\begin{align}\label{eq:Uc04}
\EE_x U^c(t+s,B_s)=&U^c(t,x)\nonumber\\[+3pt]
&+\int_\RR\big(\mathds{1}_{\{s<T-t-\psi(y)\}}-\mathds{1}_{\{0<T-t-\psi(y)\}}\big)\EE_x L^y_{(T-t-\psi(y))}(\nu-\mu)(dy)\nonumber\\[+3pt]
&-\int_\RR\mathds{1}_{\{s<T-t-\psi(y)\}}\EE_xL^y_s(\nu-\mu)(dy)\\[+3pt]
=&U^c(t,x)-\int_\RR\mathds{1}_{\{0<T-t-\psi(y)\le s\}}\EE_x L^y_{(T-t-\psi(y))}(\nu-\mu)(dy)\nonumber\\[+3pt]
&-\int_\RR\mathds{1}_{\{s<T-t-\psi(y)\}}\EE_xL^y_s(\nu-\mu)(dy)&\nonumber\\[+3pt]
=&U^c(t,x)-\EE_x\int_\RR L^y_{s\wedge(T-t-\psi(y))^+}(\nu-\mu)(dy).\nonumber
\end{align}
Therefore the process $Y^c$ is a continuous martingale as claimed and from now on we will refer to this property as to the \emph{martingale property of $U^c$}. Further, we notice that in the particular case of $c_\pm=b_\pm$ we obtain what we will refer to as the \emph{martingale property of $U$}. Then for any $(t,x)\in[0,T]\times\RR$ and any stopping time $\tau\in[0,T-t]$ it holds
\begin{align}
\label{mgpropU} U(t,x)=\EE_x\Big[U(t+\tau,B_\tau)+\int_\RR L^y_{\tau\wedge(T-t-\varphi(y))^+}(\nu-\mu)(dy)\Big],\\
\label{mgpropUc}U^c(t,x)=\EE_x\Big[U^c(t+\tau,B_\tau)+\int_\RR L^y_{\tau\wedge(T-t-\psi(y))^+}(\nu-\mu)(dy)\Big].
\end{align}

We now proceed in four steps inspired by Peskir \cite{Pe-0}. The equations considered by Peskir can be seen as a special case of \eqref{eq:intEq} and \eqref{eq:intEq2} and here we obtain uniqueness in a stronger sense, i.e.~in the class of right-continuous functions rather than continuous ones as in \cite{Pe-0}.
\vspace{+6pt}

\begin{figure}[!ht]
\label{fig:1}
\begin{center}
\includegraphics[scale=0.45]{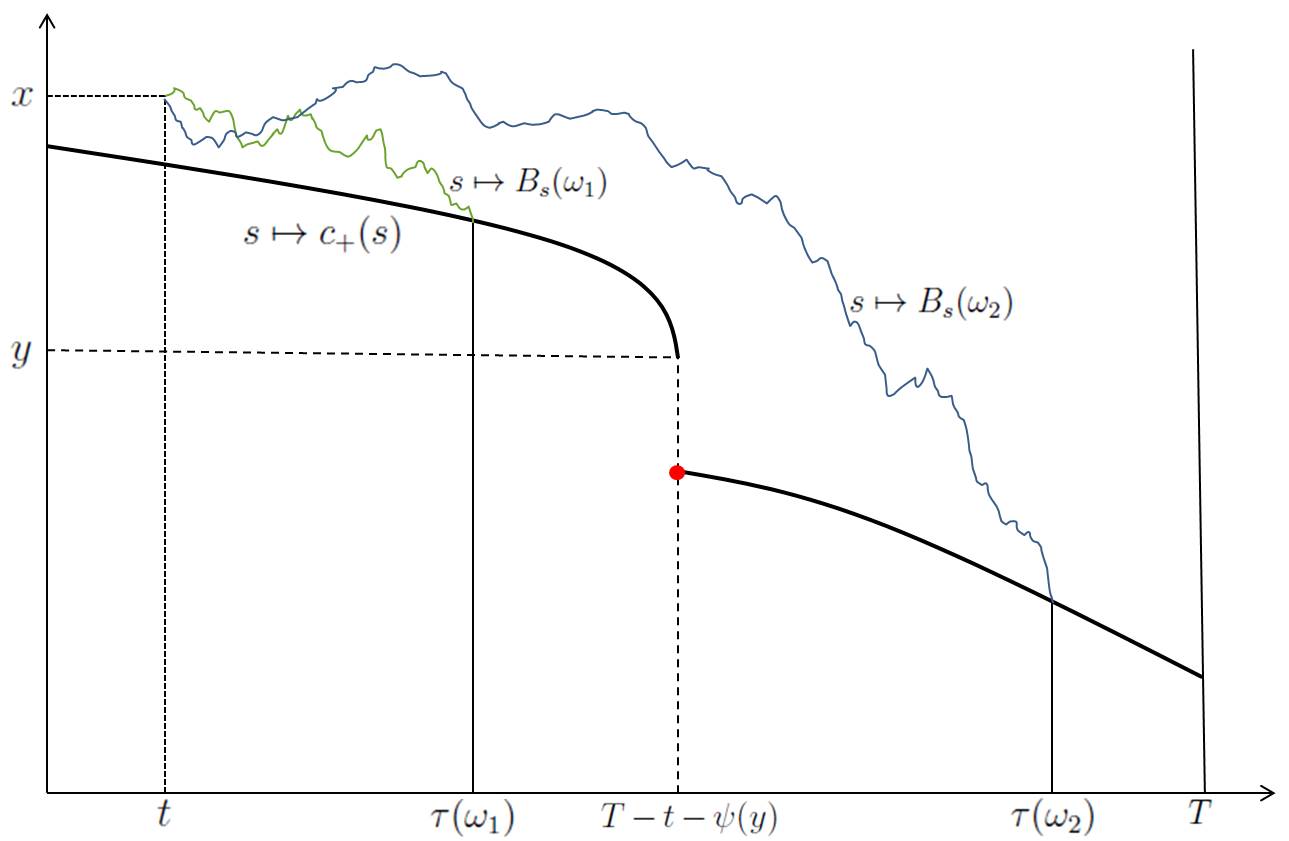}
\end{center}
\caption{Examples of Brownian sample paths prior to the first hitting time $\tau$ to the boundary $c_+$.}
\end{figure}

\emph{Step 1}. First we show that
\begin{align}\label{eq:Uc05}
U^c(t,x)=0,\quad\text{for $t\in[0,T)$, $x\in(-\infty,-c_-(t)]\cup[c_+(t),\infty)$}.
\end{align}
We fix $t\in[0,T)$ and with no loss of generality it is sufficient to consider $x>c_+(t)$ since the proof for $x<-c_-(t)$ follows from analogous arguments. We set $\tau:=\inf\{s\in(0,T-t]\,:\,B_s\le c_+(t+s)\}$ and note that since $s\mapsto B_s(\omega)-c_+(t+s)$ is positive for $s< \tau$, right-continuous, with only upwards jumps, then $B_s(\omega)-c_+(t+s)$ must cross zero in a continuous way and it must be $B_\tau=c_+(t+\tau)$, $\PP_x$-a.s. Then \eqref{eq:intEqc} implies $U^c(t+\tau,B_\tau)=0$, $\PP_x$-a.s.~and it follows from \eqref{eq:Uc04} that
\begin{align}\label{eq:Uc06}
U^c(t,x)=\int_\RR\EE_x L^y_{\tau\wedge(T-t-\psi(y))^+}(\nu-\mu)(dy).
\end{align}
The support of $y\mapsto L^y_{\tau\wedge(T-t-\psi(y))^+}$ is contained in $[\hat{b}_+,c_+(t))$, $\PP_x$-a.s.~by definition of $\tau$ and by observing that 
\begin{align}\label{interv}
\{z\in\RR:\psi(z)<T-t\}=(-c_-(t),c_+(t))
\end{align}
with $c_+(T)=\hat{b}_+$. Since $c_+$ is monotonic decreasing, for any $y>0$ it is not hard to see that:
\begin{align*}
&\text{~$i)$ if $\tau(\omega)\le T-t-\psi(y)$, then $B_s(\omega)> y$ for $s< \tau(\omega)$;}\\[+3pt]
&\text{$ii)$ if $\tau(\omega)> T-t-\psi(y)$, then $B_s(\omega)>y$ for $s< T-t-\psi(y)$;}
\end{align*}
hence $L^y_{\tau\wedge(T-t-\psi(y))^+}=0$, $\PP_x$-a.s.~and \eqref{eq:Uc05} follows (see also Figure 1).
Since $U$ is non negative by definition (see \eqref{eq:UT}) then it also follows
\begin{align}\label{eq:Uc07}
0=U^c(t,x)\le U(t,x),\quad\text{for $t\in[0,T)$, $x\in(-\infty,-c_-(t)]\cup[c_+(t),\infty)$}.
\end{align}
\vspace{+6pt}

\emph{Step 2}. Next we prove that $U^c\le U$ on $[0,T]\times\RR$. Since $U^c(T,x)=U(T,x)$, $x\in\RR$ and \eqref{eq:Uc07} holds it only remains to consider $t\in[0,T)$ and $x\in(-c_-(t),c_+(t))$. Fix any such $(t,x)$ and denote $\sigma:=\inf\{s\in[0,T-t)\,:\,B_s\le-c_-(t+s)\:\text{or}\:B_s\ge c_+(t+s)\}$. Notice that due to \eqref{eq:Uc05} and monotonicity of $c_+$ and $c_-$, we have $U^c(t+\sigma,B_\sigma)=0$, $\PP_x$-a.s.

Then from \eqref{eq:U2} and the martingale property of $U^c$ we obtain
\begin{align}\label{eq:Uc08}
U(t,x)-U^c(t,x)\ge& \int_\RR\EE_x\big(L^y_\sigma-L^y_{\sigma\wedge(T-t-\psi(y))^+}\big)(\nu-\mu)(dy)\\
\ge&-\int_\RR\EE_x\big(L^y_\sigma-L^y_{\sigma\wedge(T-t-\psi(y))^+}\big)\mu(dy)\nonumber
\end{align}
since $u\mapsto L^y_u(\omega)$ is increasing for each $y\in\RR$. We recall \eqref{interv} and claim that
\begin{align}\label{claim}
\text{$L^y_\sigma=L^y_{\sigma\wedge(T-t-\psi(y))^+}$, $\PP_x$-a.s.~for $y\in(-c_-(t),c_+(t))$.}
\end{align}
The latter is trivial if $\sigma(\omega)\le T-t-\psi(y)$. Consider instead $\sigma(\omega)>T-t-\psi(y)$ and with no loss of generality assume $y\ge \hat{b}_+$. Monotonicity of $c_+$ implies that $B_s(\omega)<y$ for $s\in(T-t-\psi(y),\sigma(\omega))$ (see also Figure 1) so that the local time cannot increase after time $T-t-\psi(y)$ and therefore the claim holds. The case of $y\le-\hat{b}_-$ follows from symmetric arguments relative to $c_-(\cdot)$.

From \eqref{claim} and observing that $y\mapsto L^y_\sigma$ is supported in $(-c_-(t),c_+(t))$, $\PP_x$-a.s.,~we conclude that the right hand side of \eqref{eq:Uc08} equals zero and~$U^c\le U$.
\vspace{+6pt}

\emph{Step 3.} Now we aim at proving that $c_\pm(t)\le b_\pm(t)$ for $t\in[0,T)$. Let us assume that there exists $t\in[0,T)$ such that $c_+(t)>b_+(t)$ and take $x>c_+(t)$. We denote $\tau':=\inf\{s\in[0,T-t)\,:\,B_s<b_+(t+s)\}$ and from the same arguments as those used in step 1~above we have $B_{\tau'}=b_+(t+\tau')$ and $U(t+\tau',B_{\tau'})=0$, $\PP_x$-a.s. Since $x>c_+(t)>b_+(t)$ we have also $U^c(t,x)=U(t,x)=0$ and from step 2~above it follows $0=U(t+\tau',B_{\tau'})\ge U^c(t+\tau',B_{\tau'})$, $\PP_x$-a.s. Subtracting \eqref{mgpropU} from \eqref{mgpropUc} now gives 
\begin{align*}
\int_\RR\EE_x\big(L^y_{\tau'\wedge(T-t-\psi(y))^+}-L^y_{\tau'\wedge(T-t-\varphi(y))^+}\big)(\nu-\mu)(dy)\ge 0.
\end{align*}
Due to monotonicity of $b_+$, an argument similar to the one that allowed us to show that the right-hand side of \eqref{eq:Uc06} vanishes, here implies $L^y_{\tau'\wedge(T-t-\varphi(y))^+}=0$, $\PP_x$-a.s. Therefore from the last inequality it follows
\begin{align}\label{eq:Uc09}
0\le \int_\RR\EE_x L^y_{\tau'\wedge(T-t-\psi(y))^+}(\nu-\mu)(dy)=-\int_\RR\EE_x L^y_{\tau'\wedge(T-t-\psi(y))^+}\mu(dy)\le0,
\end{align}
where we have also used that $y\in\supp\,\nu \Rightarrow L^y_{\tau'\wedge(T-t-\psi(y))^+}=0$, $\PP_x$-a.s.~since $x+B_s\ge a_+$ for $s\le\tau'$ (recall that $b_+(\cdot)>a_+$ in $[0,T)$).

Now \eqref{eq:Uc09} implies that
\begin{align}\label{zero}
\text{for $\mu$-a.e.~$y\in\RR$, $L^y_{\tau'\wedge(T-t-\psi(y))^+}=0$, $\PP_x$-a.s.}
\end{align}
Since $y\mapsto L^y_{\tau'\wedge(T-t-\psi(y))^+}$ is supported in $[\hat{b}_+,c_+(t))$, $\PP_x$-a.s.~and $x>c_+(t)$, then \eqref{zero} also implies that
\begin{align}\label{contr}
\text{$\tau'\wedge(T-t-\psi(y))^+\le\zeta_y$, $\PP_x$-a.s., for $\mu$-a.e.~$y\in\RR$}
\end{align}
with $\zeta_y:=\inf\{s\ge0\,:\,B_s= y\}$. Next we show that \eqref{contr} leads to a contradiction.

Let $I\subset (b_+(t), c_+(t))$ be an open interval. For any $y\in I$, monotonicity of $b_+$ implies that $\tau'\ge \zeta_y$, $\PP_x$-a.s.~and therefore it follows from \eqref{contr} that for $\mu$-a.e.~$y\in I$ it must be $(T-t-\psi(y))^+\le \zeta_y$, $\PP_x$-a.s. However, for any fixed $y\in I$, by right-continuity of $c_+$ there must exist $\eps>0$ such that
$c_+(t+s)> y$ for $s\in[0,\eps]$ and equivalently $\psi(y)<T-(t+\eps)$. For such $y$ and $\eps$ this leads to $(T-t-\psi(y))^+\le \zeta_y\Rightarrow \zeta_y>\eps$, $\PP_x$-a.s., which is clearly impossible since $\PP_x(\zeta_y\le\eps)>0$. Therefore $c_+(t)\le b_+(t)$ for all $t\in[0,T)$.

To prove that $c_-(t)\le b_-(t)$, $t\in[0,T)$ we assume that there exists $t\in[0,T)$ such that $c_-(t)>b_-(t)$; then we pick $x<-c_-(t)$ and follow arguments as above to show a contradiction.
\vspace{+6pt}

\emph{Step 4.} At this point we prove $c_\pm=b_\pm$. Again it is enough to give the full argument only for the upper boundaries as the case of the lower ones is similar.

We assume that there exists $t\in[0,T)$ such that $c_+(t)<b_+(t)$, then by definition of $b_+$ there must exists $\delta>0$ and $x\in(c_+(t),b_+(t))$ such that $U(t,x)\ge\delta$. We denote $\sigma':=\inf\{s\in[0,T-t)\,:\,B_s\le -b_-(t+s)\:\text{or}\:B_s\ge b_+(t+s)\}$ and we recall that $\sigma'$ is optimal for $U(t,x)$ (i.e.~$\sigma'=\tau_*$ as in \eqref{eq:optst}), so that \eqref{eq:U2} gives
\begin{align}\label{eq:U3}
U(t,x)=\int_\RR\EE_x L^y_{\sigma'}(\nu-\mu)(dy).
\end{align}
Since $c_\pm\le b_\pm$ (from step 3~above) and $U^c(T,x)=0$, it follows that $U^c(t,x)=0$ and $U^c(t+\sigma',B_{\sigma'})=0$, $\PP_x$-a.s.~by \eqref{eq:Uc05}. Then subtracting \eqref{mgpropUc} from \eqref{eq:U3} gives
\begin{align}\label{eq:Uc10}
\delta\le&\int_\RR\EE_x\big(L^y_{\sigma'}-L^y_{\sigma'\wedge(T-t-\psi(y))^+}\big)(\nu-\mu)(dy)\\
=&\int_\RR\EE_x\mathds{1}_{\{\sigma'\ge(T-t-\psi(y))^+\}}\big(L^y_{\sigma'}-L^y_{(T-t-\psi(y))^+}\big)(\nu-\mu)(dy).\nonumber
\end{align}
We observe that for $y\in\supp\,\nu$ one has $\psi(y)=0$ (recall $c_\pm(t)\ge a_\pm$, $t\in[0,T)$) and then for any such $y$ and $\PP_x$-a.e.~$\omega\in\{\sigma'\ge T-t-\psi(y)\}$ we have $\sigma'(\omega)=T-t$. Using the latter inside \eqref{eq:Uc10} we get
\begin{align}\label{eq:Uc11}
\delta\le-\int_\RR\EE_x\mathds{1}_{\{\sigma'\ge(T-t-\psi(y))^+\}}\big(L^y_{\sigma'}-L^y_{(T-t-\psi(y))^+}\big)\mu(dy)\le 0
\end{align}
which leads to a contradiction.
\end{proof}

\begin{figure}[!ht]
\label{fig:2}
\begin{center}
\includegraphics[scale=0.65]{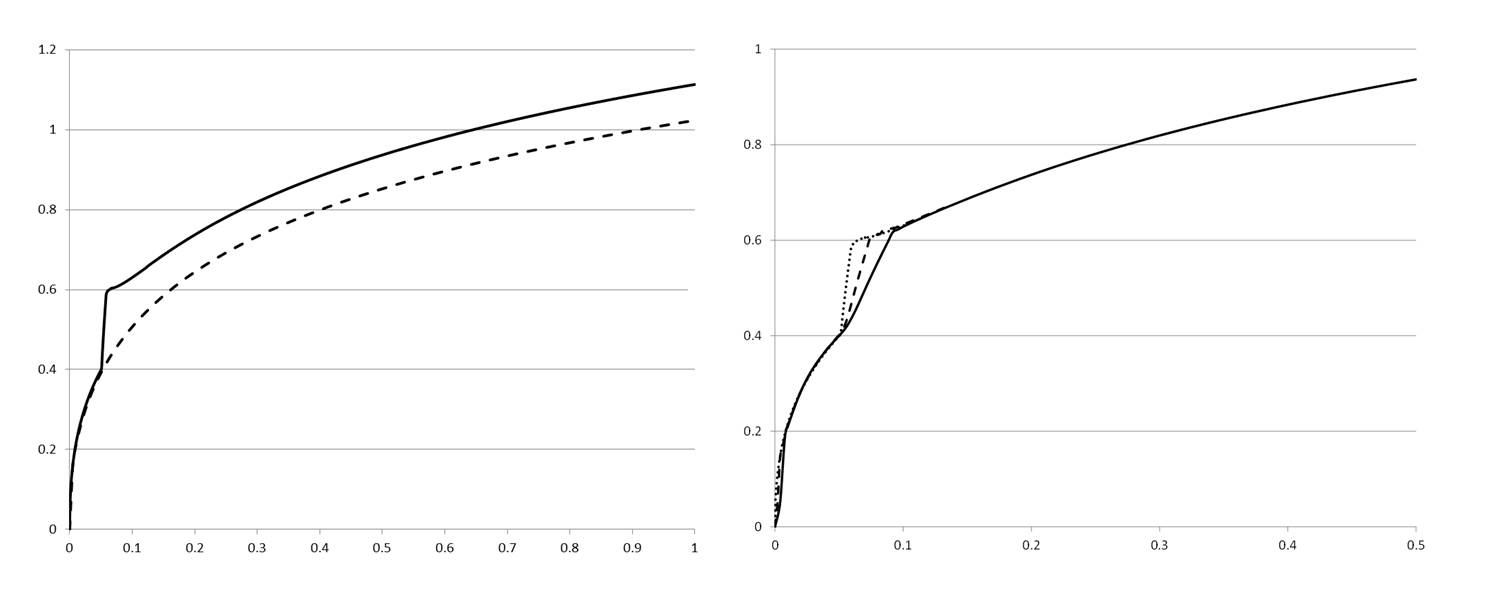}
\end{center}
\caption{\small{Boundary of Rost's barriers for uniform-like distributions.
\textbf{(Left)} $F'_\mu(x)=0.5\,\,\mathds{1}_{[0,2]}(x)$, $h=5\times 10^{-3}$ (dashed line) and $F'_\mu(x)=0.5\,\,\mathds{1}_{[0,0.4]}(x)+0.5\,\,\mathds{1}_{[0.6,2.2]}(x)$, $h=10^{-4}$ (solid line).
\textbf{(Right)} $F'_\mu(x)=0.5\,\,\mathds{1}_{[0,0.4]}(x)+0.5\,\,\mathds{1}_{[0.6,2.2]}(x)$, $h=4\times 10^{-3}$ (solid line), $h=2\times 10^{-3}$ (dashed line), $h=2\times 10^{-4}$ (fine dashed line); as expected the sharpness of the jump increases as $h\downarrow 0$.}}
\end{figure}

\begin{figure}[!ht]
\label{fig:3}
\begin{center}
\includegraphics[scale=0.6]{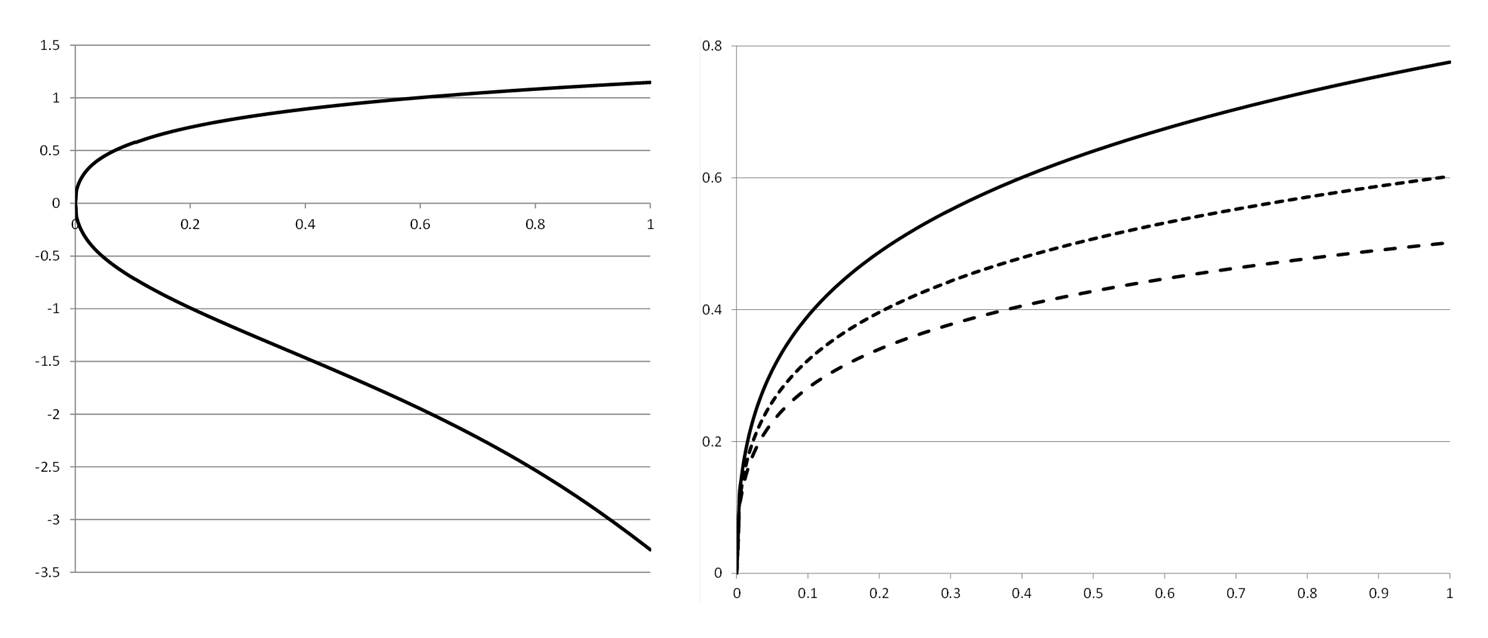}
\end{center}
\caption{\small{Boundary of Rost's barriers: \textbf{(Left)} $\mu=\mathcal{N}(m,\sigma^2)$ with mean $m=1$ and variance $\sigma^2=1$ ($h=10^{-3}$). \textbf{(Right)} $\mu=Exp(\lambda)$, $h=2\times10^{-3}$, $\lambda=1.5$ (solid line), $\lambda=2.5$ (fine dashed line), $\lambda=3.5$ (dashed line).}}
\end{figure}

\section{Numerical solution to the integral equations}\label{sec:num}

We proved above that $s_+$ and $s_-$ are the unique solutions to the equations \eqref{eq:intEq}. These equations cannot be solved analytically but in this section we describe a relatively simple algorithm for their numerical solution. 

The convergence of this algorithm has been studied in the literature in several specific examples. For instance in \cite{KK03} the authors deal with optimal exercise boundaries for American Put and Call options on dividend-paying assets, whereas in Section 5 of \cite{ZS09} the algorithm is applied to inverse first passage time problems. A broader overview of numerical methods for Volterra equations may be found in \cite{Linz} as well as in the list references of \cite{KK03} and \cite{ZS09}. 
\vspace{+5pt}

Now we illustrate the method. As in the previous section, we focus on time reversed boundaries $b_{\pm}(t)=s_{\pm}(T- t)$ for $t\in[0,T]$.
For $l=0,1,...,N$ we set $t_l=lh$ with $h=T/N$. Then for $k= 0,1,...,N-1$ we consider the following approximations of the left-hand side of the integral equations \eqref{eq:intEq}: 
\begin{align*}
&\int_{t_k}^T\hspace{-5pt}\int_{-b_-(u)}^{b_+(u)}\hspace{-3pt}p(t_k,\pm b_\pm(t),u,y)(\nu-\mu)(dy)\,du\hspace{-1pt}\approx\hspace{-1pt} h\hspace{-1pt}\sum_{l=k+1}^{N}\hspace{-3pt} \int_{-b_- (t_{l})}^{b_+ (t_{l})}\hspace{-3pt} p(t_k,b_\pm(t_k),t_{l},y)\hspace{-1pt}(\nu-\mu)(dy).
\end{align*}
Setting the above equal to zero, with a slight abuse of notation we obtain the approximating integral equations 
\begin{align}\label{Num-1} \hs{5pc}
&h\sum_{l=k+1}^{N} \int_{-b_- (t_{l})}^{b_+ (t_{l})} p\big(t_k,b_+(t_k),t_{l},y\big)(\nu-\mu)(dy)=0,\\
\label{Num-2}
&h\sum_{l=k+1}^{N} \int_{-b_- (t_{l})}^{b_+ (t_{l})} p\big(t_k,-b_-(t_k),t_{l},y\big)(\nu-\mu)(dy)=0,
\end{align}
which we are now going to solve recursively starting from $t_N=T$.

For $k=N-1$ and $b_{\pm}(t_N)=\hat{b}_{\pm}$ equations \eqref{Num-1} and \eqref{Num-2} become algebraic equations for $b_{+}(t_{N- 1})$ and $b_- (t_{N- 1})$, respectively. These can be solved  numerically using, e.g.,~Newton-Raphson or bisection method. Next we use the known values of $b_{+}(t_{N})$, $b_{+}(t_{N- 1})$, $b_- (t_{N})$ and $b_- (t_{N- 1})$ to obtain algebraic equations from \eqref{Num-1} and \eqref{Num-2} when $k=N-2$. By solving the latter numerically we get $b_{+}(t_{N- 2})$ and $b_- (t_{N- 2})$. The procedure is then iterated to compute $b_+(t_{N}),b_+(t_{N-1}),...,b_+(t_1),b_+(t_0)$ and
$b_-(t_{N}),b_-(t_{N-1}),...,b_-(t_1),b_-(t_0)$ as approximations of the optimal boundaries $b_+$ and $b_-$, respectively, at the
points $T,T-h,...,h,0$. 

Letting $h\to 0$, arguments similar to those developed in \cite{KK03,ZS09} guarantee convergence of the approximations to the actual boundaries. A complete proof of the convergence is outside the scopes of this work, however we observe it very clearly in our numerical experiments illustrated in Figures 2 and 3. 

\vspace{+15pt}


\noindent\ackn{The first named author was funded by EPSRC grant EP/K00557X/1}


\end{document}